\documentclass{article}

\usepackage[utf8]{inputenc} 
\usepackage{amsmath}
\usepackage{amsthm}
\usepackage{amssymb}
\usepackage{cases}
\usepackage{mathrsfs}
\usepackage{ascmac}
\usepackage{framed}
\usepackage{fancybox}
\usepackage{graphicx}

\def\rnum#1{\expandafter{\romannumeral #1}}
\def\Rnum#1{\uppercase\expandafter{\romannumeral #1}} 

\setlength{\topmargin}{10mm}
\addtolength{\topmargin}{-1in}
\setlength{\oddsidemargin}{20mm}
\addtolength{\oddsidemargin}{-1in}
\setlength{\evensidemargin}{15mm}
\addtolength{\evensidemargin}{-1in}
\setlength{\textwidth}{170mm}
\setlength{\textheight}{254mm} 

\setlength{\headsep}{0mm}
\setlength{\headheight}{0mm}
\setlength{\topskip}{0mm}

\theoremstyle{plain}
\newtheorem{thm}{Theorem}[section]

\newtheorem{lem}{Lemma}[section]
\newtheorem{rem}{Remark}[section]

\newcommand{\Rn}{\mathbb{R^{\textit{n}}}}
\newcommand{\N}{\mathbb{N}}
\newcommand{\Z}{\mathbb{Z}}
\newcommand{\R}{\mathbb{R}}
\newcommand{\C}{\mathbb{C}}

\newcommand{\B}{\dot{B}}

\newcommand{\tnormal}{\textnormal}
\newcommand{\dint}{\displaystyle \int}

\newcommand{\dis}{\displaystyle}
\newcommand{\vphi}{\varphi}
\newcommand{\vep}{\varepsilon}
\newcommand{\lan}{\langle}
\newcommand{\ran}{\rangle}

\newcommand{\X}{\B^{s(p)}_{p,\infty}}    
\newcommand{\x}{\B^{s(p)+\tau_H}_{p,1}}     
\newcommand{\Y}{\B^s_{p,\infty}}        
\newcommand{\ZZ}{\B^{s(p)-\tau_L}_{p,\infty}}    

\newcommand{\RL}{R_{-\Delta}}
\newcommand{\RA}{R_\mathcal{A}}


\begin{document}
\title{Asymptotic stability of the stationary Navier-Stokes flows in Besov spaces}

\author{Jayson Cunanan\thanks{Department of Mathematics, Graduate School of Science and Engineering, Saitama University, Saitama 338-8570, Japan E-mail: jcunanan@mail.saitama-u.ac.jp},
Takahiro Okabe\thanks{Department of Mathematics Education, Hirosaki University, Hirosaki  036-8560, Japan E-mail: okabe@hirosaki-u.ac.jp},
Yohei Tsutsui\thanks{Department of Mathematical Sciences, \ Shinshu University, \ Matsumoto, 390-8621, Japan E-mail: tsutsui@shinshu-u.ac.jp}}

\date{}

\maketitle{}



\begin{abstract}
We discuss the asymptotic stability of stationary solutions to the incompressible Navier-Stokes equations on the whole space in Besov spaces with positive smoothness and low integrability.
A critical estimate for the semigroup generated by the Laplacian with a perturbation is main ingredient of the argument.
\end{abstract}

\section{Introduction}
In this article, we consider the asymptotic stability for stationary solutions of the incompressible Navier-Stokes equations:
\[
(N.S.)
\begin{cases}
\; \partial_t u - \Delta u + (u \cdot \nabla)u + \nabla \pi = f \\
\; \tnormal{div} \ u =0 \\
\; u(0) = a
\end{cases}
\]
in the whole space $\Rn$ with $n \ge 3$.
Here, $a$ and $f$ are given initial data and external force, and $u$ and $\pi$ stand for the velocity and pressure of the fluid, respectively.
The stationary solution $U$ of the Navier-Stokes equation is one independent of time;
\[
(S)
\begin{cases}
\; -\Delta U + (U \cdot \nabla)U + \nabla \Pi = f \\
\; \tnormal{div} \ U =0.
\end{cases}
\]
The purpose of this paper is to show the asymptotic behavior of the non-stationary solution $u$ when $a$ close to $U$.
More precisely, we prove that for such a $u$, $u(t) \to U$ as $t \to \infty$ in Besov spaces, provided that $f$ and $a-U$ are sufficient small.
If $u$ and $U$ are solutions to (N.S.) and (S), respectively, $w:= u-U$ and $b:= a-U$ satisfy 
\[
(E)
\begin{cases}
\; \partial_t w - \Delta w + (w \cdot \nabla)w + (w \cdot \nabla)U + (U \cdot \nabla)w + \nabla \psi = 0 \\
\; \tnormal{div} \ w =0 \\
\; w(0)=b
\end{cases}
\]
with $\psi := \pi - \Pi$.
Following the idea by Kozono-Yamazaki \cite{K-Y2}, we consider the semigroup generated by the Laplacian with a perturbation.
Using estimates established in this paper, we construct the solution $w$ to the integral equation of (E).
Decay properties for $w$ means the convergence of $u(t)$ to $U$ as $t \to \infty$.

\medskip

The stationary solutions in unbounded domains were firstly treated in \cite{L}.
We refer \cite{B-S} and \cite{B-B-I-S} for further references.

In the whole space $\Rn$, there many papers treating with the asymptotic stability problem for the incompressible Navier-Stokes equation.
Gallagher, Iftimie and Planchon \cite{G-I-P} proved the asymptotic stability of the trivial stationary solution $U \equiv 0$ in Besov space $\B^{s(p)}_{p,q}$ with $q < \infty$, where $s(p) := -1+n/p$, which is a scale invariant space with respect to the Navier-Stokes equations.
Auscher, Dubois and Tchamitchian \cite{A-D-T} showed a similar result in $BMO^{-1}$ for initial data from $VMO^{-1}$, which is the closure of $C^\infty_0$ in $BMO^{-1}$.
For the non-trivial case $U \not \equiv 0$, Kozono and Yamazaki \cite{K-Y2} showed the stability of stationary solution belonging to Morrey spaces in the topology of Besov-Morrey spaces.
Bjorland, Brandolese, Iftimie and Schonbek \cite{B-B-I-S} proved similar results in weak $L^p$ spaces with low integrability $p \in (n/2,n)$ in three dimensional case.
The result in \cite{K-Y2} corresponds to the case $p>n$.
In \cite{B-B-I-S}, they imposed a low integrability condition on the initial data and external force for treating with such small $p$.
Phan and Phuc considered the stability problem in function spaces based on capacities.
We aim to study the asymptotic stability in Besov spaces $\ZZ$ with some $\tau_L>0$ and $p \in (n/2,n)$, which restrict us the case $s(p) - \tau_L >0$, through product estimates Lemma \ref{product}.
For the aim, we assume better behavior of low frequencies on the data, similarly as \cite{B-B-I-S}.

\medskip

Our result concerning with the existence of the steady state solution can be read as follows.

\begin{thm} \label{sta}
Let $n \ge 3,\ n/2 < p < n$ and $s(p):= -1+n/p$.

(\rnum{1}): If $f$ is sufficiently small in $\B^{s(p) - 2}_{p,\infty}$ , then there uniquely exists $U \in \X$ solving (S) in the sense that 
\begin{equation} \label{U int}
U = (-\Delta)^{-1} \mathbb{P} f - (-\Delta)^{-1} \mathbb{P} \nabla (U \otimes U)\ \tnormal{in}\ \X,
\end{equation}
and having $\|U\|_{\X} \lesssim \|f\|_{\B^{s(p)-2}_{p,\infty}}$.

(\rnum{2}): Additionally, if we assume that $f \in \B^{s-2}_{p,\infty}$ with $s \in (0,1)$ and $\|f\|_{\X}$ is small, the solution above also satisfies $\|U\|_{\Y} \lesssim \|f\|_{\B^{s-2}_{p,\infty}}$.
\end{thm}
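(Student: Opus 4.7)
For part (\rnum{1}), my plan is to apply Banach's fixed point theorem to the map
\[
\Phi(U) := (-\Delta)^{-1} \mathbb{P} f - (-\Delta)^{-1} \mathbb{P} \nabla (U \otimes U)
\]
on a closed ball of $\X$. Since $(-\Delta)^{-1}\mathbb{P}$ is a Fourier multiplier of order $-2$ bounded on homogeneous Besov spaces, the linear term is controlled by $\|(-\Delta)^{-1}\mathbb{P} f\|_{\X} \lesssim \|f\|_{\B^{s(p)-2}_{p,\infty}}$. For the bilinear term I would invoke Lemma \ref{product}: since $s(p) \in (0,1)$ and $s(p) < n/p$ under the hypothesis $n/2 < p < n$, the product estimate should give
\[
\|U \otimes V\|_{\B^{s(p)-1}_{p,\infty}} \lesssim \|U\|_{\X} \|V\|_{\X},
\]
the target smoothness being precisely $2 s(p) - n/p = s(p) - 1$. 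Composing with the order $-1$ multiplier $(-\Delta)^{-1}\mathbb{P}\nabla$ then lifts this into $\X$, and the usual contraction argument yields a unique small fixed point once $\|f\|_{\B^{s(p)-2}_{p,\infty}}$ is sufficiently small, together with the stated bound.

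For part (\rnum{2}) I would bootstrap on the integral equation \eqref{U int} with the fixed point $U$ from (\rnum{1}) in hand. The linear piece $(-\Delta)^{-1}\mathbb{P} f$ already lies in $\Y$ with norm $\lesssim \|f\|_{\B^{s-2}_{p,\infty}}$. For the nonlinearity I plan to apply Lemma \ref{product} in the asymmetric case, with one factor in $\X$ and one in $\Y$ (both smoothness indices in $(0,1)$, less than $n/p$, with positive sum), obtaining
\[
\|U \otimes U\|_{\B^{s-1}_{p,\infty}} \lesssim \|U\|_{\X} \|U\|_{\Y},
\]
so that $(-\Delta)^{-1}\mathbb{P}\nabla(U \otimes U) \in \Y$. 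Because $\|U\|_{\X}$ has already been forced to be small in part (\rnum{1}), the quadratic term acts as a strict perturbation in the $\Y$-norm, which can be absorbed to the left to yield $\|U\|_{\Y} \lesssim \|f\|_{\B^{s-2}_{p,\infty}}$.

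The step I expect to be most delicate is verifying that Lemma \ref{product} actually covers both the symmetric case (both factors in $\X$) and the asymmetric case (one in $\X$, one in $\Y$), at the exact smoothness losses required. This is also the reason behind the hypothesis $n/2 < p < n$: it forces $s(p) \in (0,1)$, which is what allows a Bony-type paradifferential decomposition to close at the critical exponent gap $n/p$. Outside this range the relevant products either fall below $L^p$-regularity or overshoot the available smoothness, and the contraction scheme would break down.
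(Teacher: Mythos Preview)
Your approach matches the paper's: both arguments hinge on the product estimate of Lemma~\ref{product} (combined with the embedding $\X \hookrightarrow L^{n,\infty}$ from Lemma~\ref{emb}, which is how the lemma yields both the symmetric $\X \times \X \to \B^{s(p)-1}_{p,\infty}$ bound and the asymmetric $\X \times \Y \to \B^{s-1}_{p,\infty}$ bound) together with the order $-2$ and $-1$ multiplier estimates for $(-\Delta)^{-1}\mathbb{P}$ and $(-\Delta)^{-1}\mathbb{P}\nabla$. The paper phrases the scheme as successive approximations $U_{m+1} = U_0 - (-\Delta)^{-1}\mathbb{P}\nabla(U_m \otimes U_m)$, which is equivalent to your contraction-mapping formulation.

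One small gap in part (\rnum{2}): your bootstrap ``absorb $\|U\|_{\X}\|U\|_{\Y}$ to the left'' is circular as written, since the fixed point $U$ was constructed only in $\X$ and you do not yet know $\|U\|_{\Y} < \infty$. The paper sidesteps this by bounding the iterates directly,
\[
\|U_{m+1}\|_{\Y} \lesssim \|f\|_{\B^{s-2}_{p,\infty}} + \|U_m\|_{\X}\|U_m\|_{\Y},
\]
so that finiteness propagates inductively from $U_0 = (-\Delta)^{-1}\mathbb{P} f \in \Y$ and a uniform bound follows; equivalently you could run the contraction in $\X \cap \Y$ from the start.

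A side remark on the point you flag as most delicate: Lemma~\ref{product} in the paper is \emph{not} proved via a Bony paraproduct decomposition but via the embeddings of Lemma~\ref{emb} and H\"older's inequality in weak $L^p$. The paper explicitly notes that a paraproduct approach would apparently require $p \ge 2$, which is fatal when $n=3$ since then $n/2 < p < n$ forces $p < 2$ for part of the range.
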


\begin{rem}
\begin{enumerate}
\item
Because all terms in (\ref{U int}) belong to $\X$, we can see that $U$ solves the differential equation
\[
-\Delta U + \mathbb{P} \nabla (U \otimes U) = \mathbb{P}f\ \tnormal{in}\ \B^{s(p)-2}_{p,\infty}
\]
and all terms in this equation belong to $\B^{s(p)-2}_{p,\infty}$.
For the definition of the projection $\mathbb{P}$ and $(-\Delta)^{\alpha/2}$ on homogeneous Besov spaces, see Subsection 2.2.

\item
We do not know if it is possible to construct stationary solutions in $\B^{s(p)-2}_{p,\infty}$ when $p \ge n$ or $p \le n/2$.
For later case, Bjorland-Brandolese-Iftimie-Schonbek \cite{B-B-I-S} gave a negative result in $L^p$ spaces.
\end{enumerate}
\end{rem}

\medskip

Now we are position to give our main result.
We treat with the stability of stationary solutions, constructed in Theorem \ref{sta}, in $\B^{s(p) \pm \tau}_{p,\infty}$ with some $\tau>0$ in the first and the second part, respectively.
\begin{thm} \label{main}
Let $n \ge 3$ and $n/2 < p < n$, and suppose that $a \in L^{n,\infty}$ satisfies div $a = 0$ in $\mathcal{S}^\prime$.

(\rnum{1}):  Let $\tau_H \in (0,2-n/p)$.
If $f \in \B^{s(p)-2}_{p,\infty}$ is sufficiently small and $a-U$ is sufficiently small in $\X$, where $U \in \X$ is the corresponding stationary solution with $f$ constructed in (\rnum{1}) of Theorem \ref{sta}, then there uniquely exists $u \in BC \left((0,\infty); \X \right)$ so that $u - U \in BC \left((0,\infty),\X \cap \x \right)$ and solves the differential equation
\begin{equation} \label{u diffe}
\partial_t u(t) - \Delta u(t) + \mathbb{P} \nabla (u \otimes u)(t) = \mathbb{P}f\ \tnormal{in}\ \B^{s(p)-2}_{p,\infty}
\end{equation}
for $t>0$, with the initial condition $u(0)=a$ in the sense that for $\alpha \in[0,2]$
\[
\|u(t) - a\|_{\B^{s(p) - \alpha}_{p,\infty}} \lesssim t^{\alpha/2} \|a-U\|_{\X}.
\]
Moreover, it follows that $\|u(t)-U\|_{\X} \lesssim \|a-U\|_{\X}$ and
\[
\|u(t) - U\|_{\x} \lesssim t^{-\tau_H/2} \|a-U\|_{\X}.
\]
Further, $u(t) \to U$ in $\X$ as $t \to \infty$ if and only if $e^{-t\mathcal{A}}(a-U) \to 0$ in $\X$ as $t \to \infty$, see Section 3 for the definition of the semigroup $e^{-t\mathcal{A}}$.

(\rnum{2}): Let $s \in (0,s(p))$.
If we additionally assume that $f \in \B^{s-2}_{p,\infty},\ a-U \in \Y$, and $\|f\|_{\B^{s(p)-2}_{p,\infty}} + \|a-U\|_{\X}$ is sufficiently small, where $U \in \X \cap \Y$ is the corresponding stationary solution with $f$ constructed in (\rnum{2}) of Theorem \ref{sta}, then the solution $u$ above also satisfies $u \in BC\left((0,\infty); \Y \right)$ and for all $\tau_L \in (0,s(p)-s]$
\begin{equation} \label{u decay}
\|u(t) - U\|_{\ZZ} \lesssim t^{-\gamma/2} \|a-U\|_{\Y}\ \tnormal{where}\ \gamma := s(p) - \tau_L -s. 
\end{equation}
\end{thm}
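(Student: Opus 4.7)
The approach is to translate the problem for $u$ into a fixed-point equation for the perturbation $w := u - U$ driven by the semigroup $e^{-t\mathcal{A}}$ introduced in Section 3 for the linearized operator $\mathcal{A} v := -\Delta v + \mathbb{P}\nabla\cdot(v\otimes U + U\otimes v)$. Duhamel's principle applied to (E) yields the integral equation
\begin{equation*}
w(t) = e^{-t\mathcal{A}} b - \int_0^t e^{-(t-\sigma)\mathcal{A}}\mathbb{P}\nabla\cdot(w\otimes w)(\sigma)\,d\sigma,
\end{equation*}
which I plan to solve by Banach contraction in a time-weighted Bochner space built on the relevant Besov norms. Once $w$ is produced, $u := U + w$ inherits the claimed membership in $BC((0,\infty);\X)$ and solves (\ref{u diffe}), and the decay of $w$ in the various Besov norms translates directly into the asymptotic statements.

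For part (\rnum{1}), I would work in
\begin{equation*}
X_1 := \left\{ w\in BC((0,\infty);\X) :\ \sup_{t>0}\|w(t)\|_{\X} + \sup_{t>0} t^{\tau_H/2}\|w(t)\|_{\x} <\infty \right\}.
\end{equation*}
The linear term is controlled by the semigroup estimates $\|e^{-t\mathcal{A}}b\|_{\X} \lesssim \|b\|_{\X}$ and $\|e^{-t\mathcal{A}}b\|_{\x} \lesssim t^{-\tau_H/2}\|b\|_{\X}$, granted by Section 3 under the smallness of $U$. For the bilinear part, I combine the product estimate Lemma \ref{product}, which yields $\|w\otimes w\|_{\B^{s(p)+\tau_H-1}_{p,1}} \lesssim \|w\|_{\X}\|w\|_{\x}$, with the gradient semigroup bound, whose time singularity $(t-\sigma)^{-1/2-\tau_H/2}$ integrates against the $\sigma^{-\tau_H/2}$ factor produced by two copies of $w$ to a convergent Beta integral. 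Smallness of $\|b\|_{\X}$ then closes the contraction in a small ball of $X_1$. The short-time estimate on $\|u(t)-a\|_{\B^{s(p)-\alpha}_{p,\infty}}$ follows by writing $u(t)-a = (e^{-t\mathcal{A}}-I)b+\text{(Duhamel)}$ and invoking fractional continuity of the semigroup at $t=0$. The characterization $u(t)\to U$ iff $e^{-t\mathcal{A}}b\to 0$ comes from the fact that the Duhamel term has norm $\|w\|_{X_1}^2\cdot o(1)$ as $t\to\infty$, thanks to the decay factor $t^{-\tau_H/2}$ inside the integrand.

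For part (\rnum{2}), I would enlarge the norm to
\begin{equation*}
\|w\|_{X_2} := \|w\|_{X_1} + \sup_{t>0} t^{\gamma/2}\|w(t)\|_{\ZZ},\qquad \gamma = s(p)-\tau_L-s,
\end{equation*}
and rerun the iteration. The new linear input is the low-regularity smoothing estimate $\|e^{-t\mathcal{A}}b\|_{\ZZ}\lesssim t^{-\gamma/2}\|b\|_{\Y}$; the nonlinear term is again handled by Lemma \ref{product}, now interpolating the $\|w(t)\|_{\ZZ}$ control against the $\X$ and $\x$ norms at the correct regularity. The restriction $\tau_L\le s(p)-s$ keeps $\gamma\ge 0$ and ensures that all time weights remain integrable on $(0,t)$.

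The main obstacle is establishing the critical semigroup estimates for $e^{-t\mathcal{A}}$ at both the high-regularity level $\x$ and the low-regularity level $\ZZ$, since $e^{-t\mathcal{A}}$ is not the heat semigroup and the perturbation by $U$ could in principle destroy the sharp $t$-decay. This is precisely the content of Section 3. Once those bounds are in hand, the fixed-point argument above is routine, and the admissible ranges $\tau_H\in(0,2-n/p)$ and $\tau_L\in(0,s(p)-s]$ appear as exactly the conditions under which Lemma \ref{product} and the associated convolution integrals are simultaneously consistent.
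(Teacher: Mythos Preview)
Your overall framework---reduce to a fixed point for $w=u-U$ via Duhamel with the perturbed semigroup $e^{-t\mathcal{A}}$, working in the time-weighted space $X_1$---is exactly the paper's strategy for part (\rnum{1}). But two of your steps do not close as written.

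\textbf{Part (\rnum{1}), bilinear estimate and convergence.} Lemma \ref{product} only gives $\|w\otimes w\|_{\B^{s(p)+\tau_H-1}_{p,\infty}}\lesssim\|w\|_{\X}\|w\|_{\x}$ (third index $\infty$, not $1$), so after the divergence you land in $\B^{s(p)+\tau_H-2}_{p,\infty}$ with weight $\sigma^{-\tau_H/2}$; reaching $\x=\B^{s(p)+\tau_H}_{p,1}$ via Lemma \ref{semigroup} then costs the full $(t-\sigma)^{-1}$, and $\int_0^t(t-\sigma)^{-1}\sigma^{-\tau_H/2}\,d\sigma$ diverges at $\sigma=t$. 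Your exponent $(t-\sigma)^{-1/2-\tau_H/2}$ does not arise from any consistent pairing of the product and smoothing estimates. The paper repairs this by choosing an auxiliary $\tau_0\in(\tau_H/2,\,1-n/(2p))$ and using instead the product rule $\|gh\|_{\B^{s(p)+2\tau_0-1}_{p,\infty}}\lesssim\|g\|_{\B^{s(p)+\tau_0}_{p,1}}\|h\|_{\B^{s(p)+\tau_0}_{p,1}}$, so that both factors contribute $\sigma^{-\tau_0/2}$ while the semigroup only needs to gain $2-(2\tau_0-\tau_H)<2$; the Beta integral then converges and scales as $t^{-\tau_H/2}$, after which an interpolation $\|\cdot\|_{X_{\tau_0}}\lesssim\|\cdot\|_X^{1-\tau_0/\tau_H}\|\cdot\|_{X_{\tau_H}}^{\tau_0/\tau_H}$ returns to the $X_1$-norm. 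For the $\X$-bound on $B(w,w)$ the paper avoids any Beta integral and applies the critical estimate Lemma \ref{critical} directly. Your treatment of the equivalence $u(t)\to U\Leftrightarrow e^{-t\mathcal{A}}b\to 0$ is also too quick: the Duhamel term is merely bounded in $\X$, not $o(1)$. The paper uses a Cannone--Karch splitting at $\delta t$: the piece on $[0,\delta t]$ goes to zero by dominated convergence, the piece on $[\delta t,t]$ is handled by Lemma \ref{critical}, and the reverse implication requires a $\limsup$ absorption $\limsup\|w(t)\|_{\X}\lesssim\varepsilon\limsup\|w(t)\|_{\X}$.

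\textbf{Part (\rnum{2}), the enlarged fixed point.} Your contraction in $X_2$ cannot close, because $M:=\|a-U\|_{\Y}$ is \emph{not} assumed small---only $\varepsilon=\|a-U\|_{\X}$ is---so already the linear term has $X_2$-norm of order $M$ and forces the ball radius to be large. Moreover the bilinear bound in $\ZZ$ needs $\|w\|_{\Y}$, which your $X_2$ does not control. The paper therefore does not rerun a symmetric iteration. It first propagates $\sup_t\|w(t)\|_{\Y}\lesssim M$ as a \emph{linear} a posteriori bound on the solution already built in part (\rnum{1}), using $\|B(g,h)\|_{Y(t)}\lesssim\|g\|_{X(t)}\|h\|_{Y(t)}$ with $\|g\|_{X(t)}\lesssim\varepsilon$ small (Lemma \ref{critical} again). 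Then, following Bjorland--Brandolese--Iftimie--Schonbek, it splits $B(w,w)(t)$ at $t/2$: the far piece is $e^{-t\mathcal{A}/2}B(w,w)(t/2)$, bounded in $\ZZ$ by $t^{-\gamma/2}\varepsilon M$ via smoothing plus Lemma \ref{critical} in $\Y$; the near piece is bounded by $\varepsilon\sup_{\sigma\ge t/2}\|w(\sigma)\|_{\ZZ}$ via Lemma \ref{critical} in $\ZZ$. This yields a recursion $W_{m+1}(t)\lesssim t^{-\gamma/2}M+\varepsilon\,W_m(t/2)$ on the approximants, whose $\limsup$ in $m$ is $\lesssim t^{-\gamma/2}M$, and the conclusion passes to $w$ by duality. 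The smallness of $\varepsilon$ alone closes the argument; $M$ enters only as a constant.
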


\begin{rem}
\begin{enumerate}
\item
Lemma \ref{emb} below tells us the following decay estimates;
\begin{align*}
\|u(t) - U\|_{L^{n/(1-\tau_H)}} & \lesssim \|u(t) - U\|_{\x} \lesssim t^{-\tau_H/2} \\
\|u(t)-U\|_{L^{n/(1 + \tau_L), \infty}} & \lesssim \|u(t) - U\|_{\ZZ} \lesssim t^{-\gamma/2},
\end{align*}
where
\[
p < \dfrac{np}{n-sp} \le \dfrac{n}{1+\tau_L} < n < \dfrac{n}{1-\tau_H} < \dfrac{np}{n-p}.
\]

\item
It is well-known that $|x|^{-1} \in \X \hookrightarrow L^{n,\infty}$ and $|x|^{-1} \in L^{n,q} \iff q=\infty$.
These mean that (\rnum{1}) does not covered by a result in Kozono-Yamazaki \cite{K-Y2}, though the statement is very similar as them and the strategy of the proof is the same as them.
In there the authors gave the stability of stationary solution in Besov-Morrey spaces.
When their space coincides with $\X$, their external force belongs to $\dot{W}^{-2,n}$.
The example above says that the class of our external forces, $\B^{s(p)-2}_{p,\infty}$, is not covered by $\dot{W}^{-2,n}$.

\item
Since $s(p) - \tau_L > 0$ in (\rnum{2}), the result is also independent of a result in Bjorland-Brandolese-Iftimie-Schonbek \cite{B-B-I-S}, in there the asymptotic stability was discussed in weak $L^p$ spaces.
\end{enumerate}
\end{rem}

\medskip

Applying the existence theorem of stationary solution in $L^{n,\infty}$ by Bjorland-Brandolese-Iftimie-Schonbek \cite{B-B-I-S}, we can show the similar result as follows.
Since the proof is almost same as that of Theorem \ref{main}, we omit it.
\begin{thm} \label{main 2}
Let $n \ge 3$ and $n/2 < p < n$, and suppose that $a \in \L^{n,\infty}$ satisfies div $a = 0$ in $\mathcal{S}^\prime$.

(\rnum{1}): Let $\tau_H \in (0,2-n/p)$.
If $(-\Delta)^{-1} \mathbb{P} f \in L^{n,\infty}$ is sufficiently small and $a-U$ is sufficiently small in $\X$, where $U \in \L^{n,\infty}$ is the corresponding stationary solution with $f$ constructed in \cite{B-B-I-S}, then there uniquely exists $u \in BC \left((0,\infty); L^{n,\infty} \right)$ so that $u - U \in BC \left((0,\infty),\X \cap \x \right)$ and solves the differential equation
\begin{equation} \label{u diffe}
\partial_t u(t) - \Delta u(t) + \mathbb{P} \nabla (u \otimes u)(t) = \mathbb{P}f
\end{equation}
in $\dot{W}^{-2,(n,\infty)} := \{(-\Delta)^{-1}g ; g \in L^{n,\infty} \}$ for $t>0$, with the initial condition $u(0)=a$ in the sense that for $\alpha \in[0,2]$
\[
\|u(t) - a\|_{\B^{s(p) - \alpha}_{p,\infty}} \lesssim t^{\alpha/2} \|a-U\|_{\X}.
\]
Moreover, it follows that $\|u(t)-U\|_{\X} \lesssim \|a-U\|_{\X}$ and
\[
\|u(t) - U\|_{\x} \lesssim t^{-\tau_H/2} \|a-U\|_{\X}.
\]
Further, $u(t) \to U$ in $\X$ as $t \to \infty$ if and only if $e^{-t\mathcal{A}}(a-U) \to 0$ in $\X$ as $t \to \infty$, see Section 3 for the definition of the semigroup $e^{-t\mathcal{A}}$.

(\rnum{2}): Let $s \in (0,s(p))$ and $\ell(s) := np/(n-sp) \in (p,\infty)$.
If we additionally assume that $(-\Delta)^{-1} \mathbb{P} f \in L^{\ell(s),\infty},\ a-U \in \Y$, and $\|(-\Delta)^{-1} \mathbb{P} f\|_{L^{n,\infty}} + \|a-U\|_{\X}$ is sufficiently small, where $U \in \L^{n,\infty} \cap L^{\ell(s),\infty}$ is the corresponding stationary solution with $f$ constructed in \cite{B-B-I-S}, then the solution $u$ above also satisfies $u \in BC\left((0,\infty); L^{\ell(s),\infty} \right)$ and for all $\tau_L \in (0,s(p)-s]$
\begin{equation} \label{u decay}
\|u(t) - U\|_{\ZZ} \lesssim t^{-\gamma/2} \|a-U\|_{\Y}\ \tnormal{where}\ \gamma := s(p) - \tau_L -s. 
\end{equation}
\end{thm}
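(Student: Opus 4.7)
The plan is to follow the proof of Theorem \ref{main} essentially verbatim, since the structural ingredients are the same: one writes the perturbation equation for $w=u-U$ as an integral equation via the semigroup $e^{-t\mathcal{A}}$ generated by the linearization of (N.S.) around $U$, and solves it by contraction. The only substantive change is that the steady state $U$ is now produced by the Bjorland--Brandolese--Iftimie--Schonbek construction in $L^{n,\infty}$ (resp.\ $L^{\ell(s),\infty}$ for part (\rnum{2})) rather than in the Besov scale of Theorem \ref{sta}. I would first check that the semigroup estimates driving the proof of Theorem \ref{main} depend on $U$ only through $\|U\|_{L^{n,\infty}}$ (resp.\ $\|U\|_{L^{\ell(s),\infty}}$), so that all smallness hypotheses can be rephrased in terms of $\|(-\Delta)^{-1}\mathbb{P}f\|_{L^{n,\infty}}$ via the a priori bounds in \cite{B-B-I-S}.

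The core step is to re-establish the critical semigroup estimates of Section 3, namely that $e^{-t\mathcal{A}}$ maps $\X$ boundedly into itself and into $\x$ with gain $t^{-\tau_H/2}$, and maps $\Y$ into $\ZZ$ with gain $t^{-\gamma/2}$. In those proofs $U$ enters only through a product estimate of the form $\|\mathbb{P}\nabla\cdot(U\otimes v)\|_{\B^{\sigma-1}_{p,\infty}}\lesssim\|U\|_{*}\|v\|_{\B^{\sigma}_{p,\infty}}$; by Lemma \ref{product}, Lemma \ref{emb}, and the Lorentz--H\"older inequality, $\|U\|_*$ may be taken as $\|U\|_{L^{n,\infty}}$ in the $\X$-to-$\x$ bound and as $\|U\|_{L^{\ell(s),\infty}}$ in the $\Y$-to-$\ZZ$ bound. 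This is the point at which the embedding $\X\hookrightarrow L^{n,\infty}$ shows that nothing is lost in passing from $\|U\|_{\X}$ to $\|U\|_{L^{n,\infty}}$.

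With these estimates the contraction argument of Theorem \ref{main} runs unchanged in $BC((0,\infty);\X)\cap\{w:\sup_{t>0}t^{\tau_H/2}\|w(t)\|_{\x}<\infty\}$, enlarged by the $\Y$ and $t^{\gamma/2}\|\cdot\|_{\ZZ}$ norms for part (\rnum{2}). The bilinear Duhamel estimate, the initial-data relation $\|u(t)-a\|_{\B^{s(p)-\alpha}_{p,\infty}}\lesssim t^{\alpha/2}\|a-U\|_{\X}$, and the asymptotic equivalence with $e^{-t\mathcal{A}}(a-U)$ as $t\to\infty$ transcribe verbatim. The continuity $u\in BC((0,\infty);L^{n,\infty})$ (resp.\ $L^{\ell(s),\infty}$) then follows from $u=U+w$ together with $\X\hookrightarrow L^{n,\infty}$ (resp.\ $\Y\hookrightarrow L^{\ell(s),\infty}$).

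The main obstacle is precisely this Lorentz-space version of the critical semigroup estimate: one must verify that every product and commutator estimate in Section 3 remains valid with $U$ only in $L^{n,\infty}$, with the same scaling exponents $\tau_H/2$ and $\gamma/2$. This is plausible because those proofs reduce, via $\X\hookrightarrow L^{n,\infty}$, to Lorentz--H\"older-type bounds, but the bookkeeping at each iterate of the perturbation series for $e^{-t\mathcal{A}}$ is the delicate part. Once that is secured, parts (\rnum{1}) and (\rnum{2}) are routine adaptations of Theorem \ref{main}.
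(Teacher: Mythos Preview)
Your approach is correct and matches the paper's, which explicitly omits the proof with the remark that it is ``almost same as that of Theorem~\ref{main}.'' Your stated obstacle is in fact no obstacle at all: the resolvent and semigroup estimates of Section~3 (Lemmas~\ref{AB}, \ref{C}, \ref{resolvent}, \ref{semigroup}, \ref{reso last}, \ref{critical}) are already formulated and proved under the sole hypothesis that $\|U\|_{L^{n,\infty}}$ is small, so nothing needs to be re-verified---in particular the $\Y\to\ZZ$ smoothing also depends only on $\|U\|_{L^{n,\infty}}$, not on $\|U\|_{L^{\ell(s),\infty}}$, the latter being needed only at the very end to place $u=U+w$ in $BC((0,\infty);L^{\ell(s),\infty})$ via $\Y\hookrightarrow L^{\ell(s),\infty}$.
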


\begin{rem}
\begin{enumerate}
\item
For the existence of stationery solutions in $L^{n,\infty}$, used in above, see Theorem 2.2 in \cite{B-B-I-S}.

\item
From Lemma \ref{emb} below, we know that $\X \hookrightarrow L^{n,\infty}$ and $\Y \hookrightarrow L^{\ell(s),\infty}$.

\item
Theorems \ref{main} and \ref{main 2} are not covered by each other.
Although, the class of the external force in Theorem \ref{main} is larger than that of Theorem \ref{main 2}, the class of non-stationary solution in Theorem \ref{main} is smaller than that of Theorem \ref{main 2}.
\end{enumerate}
\end{rem}

\medskip

This paper is organized as follows.
In Section 2, we give the embedding between Besov spaces and weak $L^p$ spaces in Lemma \ref{emb}.
From this, we obtain a product estimate in Besov spaces.
This is a corollary of H$\ddot{\tnormal{o}}$lder inequality in weak $L^p$ spaces.
After that, following Bourdaud \cite{Bo}, and also \cite{B-C-D}, we define Fourier multipliers including the Helmholtz projection and the resolvent operator for the Laplacian on the homogeneous Besov spaces.
Section 3 is devoted to establish resolvent estimates for a perturbed Laplacian following Kozono-Yamazaki \cite{K-Y2}, and then give smoothing estimates for the semigrouop generated by the Laplacian.
A critical bound for Duhamel term is also proved.
In Section 4, after Theorem \ref{sta} is showed, we prove Theorem \ref{main}.

\section{Preliminary}
Here, we recall definitions of function spaces, which we use, and collect inequalities, which are main tools for estimates in the following sections.

Throughout this paper we use the following notations.
$\mathcal{S}$ and $\mathcal{S}^\prime$ denote the Schwartz spaces of rapidly decreasing smooth functions and tempered distributions, respectively.
With reference \cite{Bo} and \cite{B-C-D}, we make use of a subspace $\mathcal{S}_h^\prime$ of $\mathcal{S}^\prime$ in the definition of the homogeneous Besov spaces.
This is a space of all tempered distributions $f$ fulfilling
\[
\psi(\lambda D)f := \mathcal{F}^{-1} \left[\psi(\lambda \cdot) \hat{f} \right] \to 0\ \tnormal{in}\ L^\infty\ \tnormal{as}\ \lambda \to \infty,
\]
for all $\psi \in \mathcal{S}$.
The symbols $\hat{}$ and $\mathcal{F}^{-1}$ denote the Fourier transform and its inverse.
$A \lesssim B$ means $A \le c B$ with a positive constant $c$.
$A \approx B$ means $A \lesssim B$ and $B \lesssim A$.
Let $s(p) := -1+n/p$.
For $p \in [1,\infty]$, we denote $p^\prime \in [1,\infty]$ satisfying $1/p+1/{p^\prime} = 1$.

\subsection{Function spaces}
Let us recall the definition of Besov spaces.
We fix $\vphi \in \mathcal{S}(\Rn)$ satisfying supp $\vphi \subset \{1/2 \le |\xi| \le 2 \}$ and $\dis \sum_{j \in \Z} \vphi \left(\dfrac{\xi}{2^j} \right) = 1$ for $\xi \in \Rn \backslash \{0\}$, and then $\vphi_j(D)f := \mathcal{F}^{-1} \left[\vphi \left(\dfrac{\cdot}{2^j} \right) \hat{f} \right] \in \mathcal{S}^\prime$ for $f \in \mathcal{S}^\prime$.
The fact that for any $f \in \mathcal{S}^\prime$, $\dis \sum_{j \ge 0} \vphi_j(D)f$ converges in the sense of $\mathcal{S}^\prime$ is well-known.
But, $\dis \sum_{j \le 0} \vphi_j(D)f$ may diverge in general.
We can see this when $f$ is a constant.
It is known that $\dis \sum_{j \le 0} \vphi_j(D)f$ converges in $\mathcal{S}^\prime$ and belongs to $\mathcal{S}^\prime_h$ if $f \in \mathcal{S}_h^\prime$.
Thus, it holds that $f = \dis \sum_{j \in \Z} \vphi_j(D)f \in \mathcal{S}^\prime_h$ converges in $\mathcal{S}^\prime$ for $f \in \mathcal{S}_h^\prime$.
We refer \cite{Bo} and \cite{B-C-D} for these facts.

For $p,q \in [1, \infty]$ and $s \in \R$, the homogeneous Besov space $\B^s_{p,q}$ is defined as
\[
\B^s_{p,q} := \left\{f \in \mathcal{S}_h^\prime; \|f\|_{\B^s_{p,q}} := \dis \left\| \left\{ 2^{js} \|\vphi_j(D)f \|_{L^p} \right\}_{j \in \Z} \right\|_{\ell^q} < \infty \right\}.
\]
These spaces become Banach spaces, if $s<n/p$ when $q>1$ or $s \le n/p$ when $q=1$, see \cite{Bo}, \cite{B-C-D} and \cite{D-M}.
$\B^s_{p,q}$ is invariant with respect to the scaling for the Navier-Stokes equations when $s=s(p)$.
We refer \cite{Bo}, \cite{B-C-D}, \cite{D-M} and \cite{S} for fundamental and important properties of Besov spaces.

\medskip

Next, we recall weak $L^p$ spaces.
Let $f$ be a measurable function and $p \in [1,\infty)$.
$f$ belongs to $L^{p,\infty}$ if
\[
\|f\|_{L^{p,\infty}} := \dis \sup_{\lambda > 0} \lambda \left| \left\{x \in \Rn; |f(x)| > \lambda \right\} \right|^{1/p} < \infty.
\] 
When $p \in (1,\infty)$, this space can be characterized by means of real interpolation;
\[
L^{p,\infty} = (L^{p_0}, L^{p_1})_{\theta, \infty}
\]
where $\theta \in (0,1),\ 1/p = (1-\theta)/{p_0} + \theta/{p_1}$ and $1< p_0 < p < p_1 \le \infty$.
The later space is a Banach space equipped the norm
\[
\dis \sup_{\lambda >0} \lambda^{-\theta} K(\lambda,f; L^{p_0}, L^{p_1}).
\]
The $K$-functional is defined by
\[
K(\lambda,f; L^{p_0}, L^{p_1}) := \inf_{\substack{f=f_0+f_1 \\ f_0 \in L^{p_0},\ f_1 \in L^{p_1}}} \left(\|f_0\|_{L^{p_0}} + \lambda \|f_1\|_{L^{p_1}} \right).
\]
$L^{n,\infty}$ is also scaling invariant space for the Naver-Stokes equations, and we know the following inclusions,
\[
\B^{n-1}_{1,\infty} \hookrightarrow \X \hookrightarrow L^{n,\infty} \hookrightarrow \B^{s(q)}_{q,\infty} \hookrightarrow BMO^{-1},
\]
where $1 \le p < n < q < \infty$, and the last space is treated by Koch-Tataru \cite{K-T}.
Kozono-Yamazaki \cite{K-Y1} constructed small global solutions in the frame work of Besov-Morrey spaces.

\medskip

We make use of next lemma to establish product estimates in Besov spaces in Lemma \ref{product} below.

\begin{lem} \label{emb}
Let $1 < p < \infty,\ -n/{p^\prime} \le s<n/p$ and $\ell := np/(n-sp) \in [1,\infty)$.\\
(\rnum{1}): If $s>0$, then $\Y \hookrightarrow L^{\ell,\infty}$.

\noindent
(\rnum{2}): If $s<0$, then $L^{\ell,\infty} \hookrightarrow \Y$.
\end{lem}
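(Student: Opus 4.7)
The plan is to prove both embeddings by a Littlewood--Paley argument: for (i), a Bernstein-type splitting of the level sets of $f$; for (ii), a direct convolution estimate via Young's inequality in Lorentz spaces.

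For (i), fix $f\in\dot B^s_{p,\infty}$ with $0<s<n/p$, and note $n/\ell=n/p-s>0$. Bernstein's inequality gives
\[
\|\varphi_j(D)f\|_{L^\infty}\lesssim 2^{jn/p}\|\varphi_j(D)f\|_{L^p}\le 2^{jn/\ell}\|f\|_{\dot B^s_{p,\infty}}.
\]
Summing over $j\le J$ bounds the low-frequency part in $L^\infty$ by $C\,2^{Jn/\ell}\|f\|_{\dot B^s_{p,\infty}}$ (the series converges because $n/\ell>0$), while geometric summation bounds the high-frequency $L^p$ tail $\sum_{j>J}\varphi_j(D)f$ by $C\,2^{-Js}\|f\|_{\dot B^s_{p,\infty}}$. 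Splitting the level set $\{|f|>2\lambda\}$ between these two pieces, choosing $J$ so that the $L^\infty$ bound on the low-frequency part equals $\lambda$, and applying Chebyshev to the high-frequency part yields $\lambda^\ell|\{|f|>2\lambda\}|\lesssim\|f\|_{\dot B^s_{p,\infty}}^\ell$, i.e.\ the desired weak-$L^\ell$ bound.

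For (ii), given $f\in L^{\ell,\infty}$ with $s<0$, the aim is the uniform estimate $\sup_j 2^{js}\|\varphi_j(D)f\|_{L^p}\lesssim\|f\|_{L^{\ell,\infty}}$. Writing $\varphi_j(D)f=K_j*f$ with $K_j(x)=2^{jn}\check\varphi(2^j x)$, the O'Neil (Lorentz) version of Young's inequality together with the embedding $L^{p,1}\hookrightarrow L^p$ gives
\[
\|K_j*f\|_{L^p}\lesssim\|K_j*f\|_{L^{p,1}}\lesssim\|K_j\|_{L^{r,1}}\|f\|_{L^{\ell,\infty}},
\]
where $r$ is determined by $1+1/p=1/r+1/\ell$, so $1/r=1+s/n\in(0,1)$ under the hypothesis $-n/p'<s<0$. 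Since $\check\varphi$ is Schwartz we have $\|K_j\|_{L^{r,1}}\approx 2^{jn(1-1/r)}=2^{-js}$, and taking the supremum in $j$ finishes the proof.

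The main technical obstacle is the Lorentz-space Young inequality used in (ii): the classical form does not suffice because $L^{\ell,\infty}$ is only weak, and we must land in the Lorentz space $L^{p,1}$ to then drop to $L^p$. The hypothesis $\ell\ge 1$ (equivalently $s\ge -n/p'$) is exactly what ensures $r\in[1,\infty]$, making the Lorentz-Young estimate available; the endpoint $\ell=1$ sits on the boundary of this inequality and, if needed, can be handled by a separate direct estimate exploiting the Schwartz regularity of $K_j$.
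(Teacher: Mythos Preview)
Your proof is correct and proceeds along essentially the same lines as the paper's. For (\rnum{1}) both arguments split $f$ into low- and high-frequency pieces, apply Bernstein's inequality to each, and optimize the cut; the only difference is in the packaging---the paper bounds the pieces in two Lebesgue spaces $L^{\ell_0}, L^{\ell_1}$ sitting symmetrically around $\ell$ and invokes the $K$-functional characterization $L^{\ell,\infty}=(L^{\ell_0},L^{\ell_1})_{1/2,\infty}$, whereas you use the extreme endpoints $L^p$ and $L^\infty$ and estimate the distribution function directly. These are equivalent formulations of the same real-interpolation idea.

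For (\rnum{2}) the paper simply writes ``since $s<0$ implies $\ell<p$, (\rnum{2}) is immediately shown by Bernstein's inequality''; your O'Neil argument makes this one-liner explicit, and in fact is one standard way to justify the Lorentz-space form of Bernstein that the paper is invoking. Your remark that the endpoint $\ell=1$ lies at the boundary of O'Neil's inequality is accurate; the paper does not address this endpoint either, and only the range $\ell>1$ is used downstream (cf.\ Lemma~\ref{product}).
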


\begin{proof}
Since $s<0$ implies $\ell < p$, (\rnum{2}) is immediately showed by Bernstein's inequality.

(\rnum{1}): We take $\ell_0,\ \ell_1 \ge 1$ so that $p < \ell_0 < \ell < \ell_1 < \infty$ and $2/\ell = 1/\ell_0 + 1/\ell_1$.
Bernstein's inequality yields that
\[
\left\|\sum_{j \ge \lambda_\ast} \vphi_j(D) f \right\|_{L^{\ell_0}} \lesssim 2^{\alpha \lambda_\ast} \|f\|_{\B^s_{p,\infty}}\ \tnormal{and}\ \left\|\sum_{j < \lambda_\ast} \vphi_j(D)f \right\|_{L^{\ell_1}} \lesssim 2^{\beta \lambda_\ast} \|f\|_{\B^s_{p,\infty}},
\]
where $\alpha = n(1/p - 1/\ell_0) - s < 0$ and $\beta = n(1/p - 1/\ell_1) - s > 0$.
Therefore, one has $K(\lambda, f; L^{\ell_0}, L^{\ell_1}) \lesssim \left(2^{\alpha \lambda_\ast} + \lambda 2^{\beta \lambda_\ast} \right) \|f\|_{\B^s_{p,\infty}}$.
Optimizing $\lambda_\ast \in \Z$, we obtain
\[
\|f\|_{(L^{\ell_0}, L^{\ell_1})_{1/2,\infty}} \le \dis \sup_{\lambda > 0} \lambda^{-1/2} \left(\left\|\sum_{j \ge \lambda_\ast} \vphi_j(D) f \right\|_{L^{\ell_0}} + \lambda \left\|\sum_{j < \lambda_\ast} \vphi_j(D)f \right\|_{L^{\ell_1} }\right) \lesssim \|f\|_{\B^s_{p,\infty}}.
\]
Thus, from the characterization of $L^{\ell, \infty}$ by real interpolation, the desired inequality $\|f\|_{L^{\ell, \infty}} \lesssim \|f\|_{\B^s_{p,\infty}}$ is proved.
\end{proof}

\medskip

\subsection{Product estimates}
The following estimate is applied to control the convection term $(g \cdot \nabla)h = \nabla \cdot (g \otimes h)$.
Instead of the paraproduct argument, we use embeddings between Besov spaces and weak $L^p$ spaces in Lemmas \ref{emb}.
As we see, the inequality is a consequence of the H$\ddot{\tnormal{o}}$lder inequality in weak $L^p$ spaces.
If we try to give the similar estimate by using paraproduct, it seems that $p \ge 2$ is needed.
This restriction causes a crucial problem for our purpose when $n=3$.

\begin{lem} \label{product}
Let $n \ge 3,\ n/2 < p <n $ and $0 < s < 1$.
Then,
\[
\|gh\|_{\B^{s-1}_{p,\infty}} \lesssim \|g\|_{L^{n,\infty}} \|h\|_{\Y}.
\]
\end{lem}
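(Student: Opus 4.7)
The plan is to chain together the two embeddings from Lemma \ref{emb} with H\"older's inequality in weak Lebesgue spaces, which is exactly the philosophy advertised just before the lemma (``Instead of the paraproduct argument\dots'').

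First I would set $\ell := np/(n-sp)$ and invoke Lemma \ref{emb}(i) to get $\|h\|_{L^{\ell,\infty}} \lesssim \|h\|_{\Y}$. This requires $0 < s < n/p$, which is automatic since $0 < s < 1 < n/p$ (the latter because $p < n$). Next, using the H\"older inequality in weak $L^p$ spaces with $1/r := 1/n + 1/\ell = (n+p(1-s))/(np)$, I obtain
\[
\|gh\|_{L^{r,\infty}} \lesssim \|g\|_{L^{n,\infty}} \|h\|_{L^{\ell,\infty}}.
\]
Finally, observing that $r = np/(n-(s-1)p)$ and that $s-1 < 0$, I would apply Lemma \ref{emb}(ii) to embed $L^{r,\infty} \hookrightarrow \B^{s-1}_{p,\infty}$, which yields $\|gh\|_{\B^{s-1}_{p,\infty}} \lesssim \|gh\|_{L^{r,\infty}}$, and concatenating the three displays finishes the proof.

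The only real work is the bookkeeping on parameter ranges so that the embedding lemma applies on both ends. For Lemma \ref{emb}(ii) applied to $\B^{s-1}_{p,\infty}$, I need $-n/p' \le s-1 < 0$; the upper bound is trivial and the lower bound $s \ge 1-n/p' = 1 - n + n/p$ follows because the right-hand side is negative whenever $n \ge 3$ and $p < n$, while $s > 0$. I also need $r \ge 1$, equivalently $p(n-1+s) \ge n$, which holds since $p > n/2 \ge n/(n-1) > n/(n-1+s)$ for $n \ge 3$ and $s>0$. The hypothesis $p > n/2$ (rather than merely $p > 1$) is what makes this final range check go through, so the restriction $n/2 < p < n$ stated in the lemma is essentially forced by this step.

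The main obstacle, if any, is psychological: the two-step embedding route sidesteps the usual paraproduct decomposition entirely, and one must convince oneself that the arithmetic on the exponents indeed matches $r$ with $np/(n-(s-1)p)$, which is a direct computation: $1/r = 1/n + (n-sp)/(np) = (p+n-sp)/(np) = (n-(s-1)p)/(np)$.
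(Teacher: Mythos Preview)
Your proof is correct and follows exactly the same route as the paper's own argument: embed $\Y \hookrightarrow L^{np/(n-sp),\infty}$ via Lemma~\ref{emb}(\rnum{1}), apply H\"older in weak Lebesgue spaces, then embed $L^{np/(n-(s-1)p),\infty} \hookrightarrow \B^{s-1}_{p,\infty}$ via Lemma~\ref{emb}(\rnum{2}). The only difference is notational (your $\ell$ and $r$ are the paper's $r$ and $\ell$, respectively), and you include a more careful check of the parameter ranges than the paper does.
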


\begin{proof}
From (\rnum{2}) the previous lemma, we see that $L^{\ell, \infty} \hookrightarrow \B^{s-1}_{p,\infty}$ with $\ell = np/(n-(s-1)p) \in (1,\infty)$.
One has
\begin{align*}
\|gh\|_{\B^{s-1}_{p,\infty}} & \lesssim \|gh\|_{L^{\ell,\infty}} \\
& \le \|g\|_{L^{n,\infty}} \|h\|_{L^{r,\infty}} \\
& \lesssim \|g\|_{L^{n,\infty}} \|h\|_{\Y},
\end{align*}
where $r:=np/(n-sp) >1$ and (\rnum{1}) of Lemma \ref{emb} has been used in the third inequality.
\end{proof}

\medskip

\subsection{Operators on homogeneous Besov spaces}
We make clear definitions of the projection $\mathbb{P}$ and the fractional derivative and integral operator $(-\Delta)^{\alpha/2}$ on homogeneous Besov spaces.
Moreover, we define $(\lambda + \Delta)^{-b/2}$ for $b > 0$ and $\lambda$ lying in a sector in $\C$.
Estimates for $(\lambda + \Delta)^{-1}$ are used in the next section.
Remark that our definition includes the end-point cases $p=1$ and $\infty$, although $\mathbb{P}$ is a singular integral operator.

Let $p,q \in [1,\infty],\ s \in \R$ and $f \in \B^s_{p,q}$.
For a function $m$ on $\Rn$, we consider the operator
\begin{equation} \label{m}
m(D)f := \dis \sum_{j \in \Z} \mathscr{F}^{-1} [m \vphi(\cdot/2^j) \hat{f}].
\end{equation}
Proposition 2.14 in \cite{B-C-D} is useful to show that the sum above converges in $\mathcal{S}^\prime$ and also $m(D)f \in \mathcal{S}_h^\prime$.
It is enough to prove the following:
\begin{align}
\dis \sup_{j \in \N} 2^{-j N} \left\|\mathscr{F}^{-1} \left[m \vphi (\cdot/2^j) \hat{f} \right] \right\|_{L^\infty} < \infty \quad \tnormal{with some}\ N \in \Z \label{L inf} \\
\dis \left\|\sum_{j=-m_2}^{-m_1} \mathscr{F}^{-1} \left[m \vphi (\cdot/2^j) \hat{f} \right] \right\|_{L^\infty} \to 0 \quad \tnormal{as}\ m_2 > m_1 \to \infty \label{conv}
\end{align}
Once we prove (\ref{L inf}) and (\ref{conv}) with any $\tilde{\vphi} \in \mathcal{S}$ with the Fourier support of $\tilde{\vphi}$ being a compact subset of $\Rn \backslash \{0\}$ instead of $\vphi$, we can see that $m(D)f$ is independent of $\vphi$, whenever $m \in C^\infty(\Rn \backslash \{0\})$.
Indeed, for such $\tilde{\vphi}$ and $\psi$,
\[
\left\lan \dis \sum_{j \in \Z} \mathcal{F}^{-1} \left[m \left(\vphi(\cdot/2^j) - \tilde{\vphi}(\cdot/2^j) \right) \hat{f} \right], \psi \right\ran = \left\lan \hat{f}, \sum_{j \in \Z} m \left[\vphi(\cdot/2^j) - \tilde{\vphi}(\cdot/2^j) \right] \hat{\psi} \right\ran = 0.
\]
Here, we remark that the last sum is finite one, and $m \hat{\psi} \in \mathcal{S}$.
Since the space consisting of such $\psi$ is dense in $L^2$, it turns out that $m(D)f$ is independent of the choice of cut-off function $\vphi$.

\medskip

\subsubsection{Helmholtz projection and fractional Laplacian}
Let $a \in \R$, and assume that $m \in C^\infty(\Rn \backslash \{0\})$ is $a$-homogeneous, $m(\lambda \xi) = \lambda^a m(\xi)$ for $\lambda > 0$, and satisfies $|\partial^\alpha m(\xi)| \lesssim |\xi|^{a - |\alpha|}$ for all $\alpha \in (\N \cup \{0\})^n$ and $\xi \not = 0$.
We can see that $m(D)f \in \B^{s-a}_{p,q}$, provided that 
\begin{equation} \label{a}
a \ge s - n/p\ \tnormal{when}\ q=1\ \tnormal{or}\ a > s -n/p\ \tnormal{when}\ q \in (1,\infty].
\end{equation}
To see this, we observe that $\left\|\mathscr{F}^{-1} \left[m \vphi(\cdot/2^j) \right] \right\|_{L^1} = 2^{ja} \|\mathscr{F}^{-1}[m \vphi]\|_{L^1} \lesssim 2^{ja}$.
This and (\ref{a}) ensure that (\ref{L inf}) with $N=a+n/p-s$ and (\ref{conv}) hold.
Hence, we see from Lemma 2.23 in \cite{B-C-D} that $m(D)f \in \B^{s-a}_{p,q}$ and 
\[
\|m(D)f\|_{\B^{s-a}_{p,q}} \lesssim \left\| \left\{2^{j(s - a)} \left\|\mathscr{F}^{-1} \left[m \vphi (\cdot/2^j) \hat{f} \right] \right\|_{L^p} \right\}_{j \in \Z} \right\|_{\ell^q} \lesssim \|f\|_{\B^s_{p,q}}.
\]
Applying this argument, we can define the Helmholtz projection $\mathbb{P}f := m(D)f$ with $m(\xi) := (\delta_{i,j} + \xi_i \xi_j /|\xi|^2)_{1 \le i,j \le n}$ and obtain the boundedness
\[
\|\mathbb{P}f\|_{\B^s_{p,q}} \lesssim \|f\|_{\B^s_{p,q}}
\]
when
\begin{equation} \label{s}
s \le n/p\ \tnormal{when}\ q=1\ \tnormal{or}\ s<n/p\ \tnormal{when}\ q \in (1,\infty].
\end{equation}
We remark that this inequality holds even if $p=1, \infty$.
Similarly, let us denote $(-\Delta)^{\alpha / 2}f := m_a(D)f$ with $m_a(\xi) := |\xi|^a$.
Under the condition (\ref{a}), $(-\Delta)^{a/2}$ is an operator from $\Y$ to $\B^{s-a}_{p,q}$ with the estimates
\[
\|(-\Delta)^{a/2}f\|_{\B^{s-a}_{p,q}} \lesssim \|f\|_{\B^s_{p,q}}.
\]

\medskip

\subsubsection{Resolvent operator}
Let $b \ge 0,\ \omega \in (0,\pi/2)$ and
\[
S_\omega := \{z \in \C \backslash \{0\}; |\tnormal{arg}(z)| \ge \omega \}.
\]
For $\lambda \in S_\omega$, we denote $m_b(\xi) := (\lambda - |\xi|^2)^{-b/2}$ and observe that $|\partial^\alpha m_b(\xi)| \lesssim \min(|\lambda|^{-b/2}, |\xi|^{-b}) |\xi|^{-|\alpha|}$ for $\xi \not = 0$.
Thus it holds that $\|\mathscr{F}^{-1}[m_b(\cdot) \vphi(\cdot/2^j)]\|_{L^1} = \|\mathscr{F}^{-1}[m_b(2^j \cdot) \vphi(\cdot)]\|_{L^1} \lesssim \min(|\lambda|^{-b/2}, 2^{- jb})$.
Hence under the condition (\ref{s}), it turns out that (\ref{L inf}) with $N=n/p-s$ and (\ref{conv}) hold.
Therefore, we see that $(\lambda + \Delta)^{-b/2}f := m_b(D)f = \dis \sum_{j \in \Z} \mathscr{F}^{-1}[m_b \vphi(\cdot/2^j) \hat{f}]$ converges in $\mathcal{S}^\prime$ and belongs to $\mathcal{S}_h^\prime$.
Moreover, Lemma 2.23 in \cite{B-C-D} gives
\begin{align}
\|(\lambda + \Delta)^{-b/2}f\|_{\B^s_{p,q}} & \lesssim \left\| \left\{2^{js} \left\|\mathscr{F}^{-1} \left[m_b \vphi (\cdot/2^j) \hat{f} \right] \right\|_{L^p} \right\}_{j \in \Z} \right\|_{\ell^q} \lesssim |\lambda|^{-b/2} \|f\|_{\B^s_{p,q}}, \label{reso Laplacian 1} \\
\|(\lambda + \Delta)^{-b/2} f\|_{\B^{s+b}_{p,q}} & \lesssim \left\| \left\{2^{j(s+b)} \left\|\mathscr{F}^{-1} \left[m_b \vphi (\cdot/2^j) \hat{f} \right] \right\|_{L^p} \right\}_{j \in \Z} \right\|_{\ell^q} \lesssim \|f\|_{\B^s_{p,q}}. \label{reso Laplacian 2}
\end{align}
The implicit constants above are independent of $\lambda \in S_\omega$.
Especially, we define the resolvent operator $\RL(\lambda) := (\lambda + \Delta)^{-1}$.
If (\ref{s}) holds, then $(\lambda + \Delta)$ is an injection from $\Y \cap \B^{s+2}_{p,\infty} \subset \Y$ to $\Y$ and also $I=\RL(\lambda) (\lambda + \Delta) = (\lambda + \Delta) \RL(\lambda)$ on $\B^s_{p,q}$.

\medskip

Next, we shall consider $\B^s_{p,q} - \B^s_{p_0,q}$ estimates with $p_0 > p$ for $(\lambda + \Delta)^{-b/2}$ when
\begin{equation} \label{condition b}
b\ge n/p.
\end{equation}
To do that, we observe the estimate $\|\mathscr{F}^{-1}[m_b(\cdot) \vphi(\cdot/2^j)]\|_{L^r} \lesssim \min(|\lambda|^{-b/2}, 2^{- jb}) 2^{jn(1-1/r)}$ for $r \in (1,\infty)$ by interpolating the $L^1$-estimate above and the $L^\infty$-estimate,
\[
\|\mathscr{F}^{-1}[m_b \vphi(\cdot/2^j)]\|_{L^\infty} \lesssim \left\|m_b \vphi(\cdot/2^j) \right\|_{L^1} \lesssim \min(|\lambda|^{-b/2}, 2^{- jb}) 2^{jn}.
\]
Therefore, we obtain
\begin{equation} \label{reso Laplacian 3}
\|(\lambda + \Delta)^{-b/2}f\|_{\B^s_{p_0,q}} \lesssim \left\| \left\{2^{js} \left\|\mathscr{F}^{-1} \left[m_b \vphi (\cdot/2^j) \hat{f} \right] \right\|_{L^{p_0}} \right\}_{j \in \Z} \right\|_{\ell^q} \lesssim |\lambda|^{-(b-n(1/p - 1/{p_0}))/2} \|f\|_{\B^s_{p,q}}.
\end{equation}

\medskip

\subsubsection{Composition operator}
Let $0 \le a \le b$.
We consider the composition operator $m_{a,b}(D)f$ with $m_{a,b}(\xi) := |\xi|^a (\lambda - |\xi|^2)^{-b/2}$ where $\lambda \in S_\omega$.
It is not hard to see, from the previous sections, that if
\begin{equation} \label{ab}
a-b \ge s - n/p\ \tnormal{when}\ q=1\ \tnormal{or}\ a-b > s -n/p\ \tnormal{when}\ q \in (1,\infty],
\end{equation}
then $\dis \sum_{j \in \Z} \mathscr{F}^{-1}[m_{a,b} \vphi(\cdot/2^j) \hat{f}]$ converges in $\mathcal{S}^\prime,\ m_{a,b}(D) f \in \mathcal{S}_h^\prime$ and
\[
\|m_{a,b}(D)f\|_{\B^s_{p,q}} \lesssim |\lambda|^{-b/2} \|f\|_{\B^s_{p,q}}\ \tnormal{and}\ \|m_{a,b}(D)f\|_{\B^{s+b}_{p,q}} \lesssim \|f\|_{\B^s_{p,q}}.
\]
Further, we can see that $m_{a,b}(D) = m_a(D) \circ m_b(D) = m_b(D) \circ m_a(D)$.
In fact,
\[
m_a(D) \circ m_b(D)f = \dis \sum_{j \in \Z} \mathscr{F}^{-1}[m_a \vphi(\cdot/2^j) \widehat{m_b(D)f}] = \sum_{j \in \Z} \mathscr{F}^{-1}[m_a m_b \vphi(\cdot/2^j) \tilde{\vphi}(\cdot/2^j) \hat{f}] = m_{a,b}(D)f,
\]
where the cut-off function $\tilde{\vphi}$ is chosen so that $\vphi(\xi/2^j) \tilde{\vphi}(\xi/2^k) \equiv 0$ if $j \not = k$, and we have used the fact that the definitions of the operators are independent of test functions.
Similarly, we have $m_b(D) \circ m_a(D) = m_{a,b}(D)$.

\medskip

\section{Resolvent estimates and a critical estimate for the semigroup}
Here, we discuss resolvent estimates for the Laplacian with the perturbation $\mathcal{B}$;
\begin{align*}
\mathcal{B}[w] & := \mathbb{P} \nabla \left(U \otimes w + w \otimes U \right) \\
\mathcal{A}[w] & := - \Delta w(t) + \mathcal{B}[w]
\end{align*}
where $U \in L^{n,\infty}$.
In this section, we assume that
\[
n \ge 3\ \tnormal{and}\ n/2 < p < n.
\]
This restriction steams from the presence of the perturbation $\mathcal{B}$ and Lemma \ref{product}.
We consider estimates for $\mathcal{A}$ with the domain $D(\mathcal{A}) : =\X \cap \B^{s(p)+2}_{p,\infty}$.
It is known that this is a Banach space equipped the norm $\|f\|_{\X} + \|f\|_{\B^{s(p)+2}_{p,\infty}}$, although the latter space is not so, see \cite{B-C-D}.
$D(\mathcal{A})$ is not a dense subspace of $\X$.

\subsection{Resolvent estimates for the perturbed operator $\mathcal{A}$}
We remark that $b=2$ fulfills the condition (\ref{condition b}), and then start with the estimates for $\mathcal{A}$ and $\mathcal{B}$.
\begin{lem} \label{AB}
It follows that for $s \in (0,1)$
\[
\|\mathcal{B}[w] \|_{\B^{s-2}_{p,\infty}} \lesssim \|U\|_{L^{n,\infty}} \|w\|_{\Y} \ \tnormal{and}\ \|\mathcal{A}[w]\|_{\B^{s-2}_{p,\infty}} \lesssim (1+ \|U\|_{L^{n,\infty}}) \|w\|_{\Y}.
\]
\end{lem}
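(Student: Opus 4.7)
The plan is to read $\mathcal{B}[w]$ as a composition $\mathbb{P}\circ\nabla$ applied to the bilinear product $U\otimes w$, and to estimate each factor separately using the multiplier framework of Subsection~2.3 together with the product inequality in Lemma~\ref{product}. Since the hypotheses of the present section, $n\ge 3$ and $n/2<p<n$, together with $s\in(0,1)$, are exactly those required by Lemma~\ref{product}, and since $U\in L^{n,\infty}$ and $w\in\Y=\B^s_{p,\infty}$, I first obtain
\[
\|U\otimes w\|_{\B^{s-1}_{p,\infty}}\lesssim \|U\|_{L^{n,\infty}}\|w\|_{\Y}.
\]

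Next I apply the $1$-homogeneous multiplier $\nabla$ (symbol $i\xi$) in the sense of Subsubsection~2.3.1 with $a=1$ and base regularity $s-1$. Condition (\ref{a}) becomes $1>(s-1)-n/p$, which holds trivially because $s<1$ and $n/p>1$; thus $\nabla\colon\B^{s-1}_{p,\infty}\to\B^{s-2}_{p,\infty}$ boundedly. Composing with $\mathbb{P}$, which is bounded on $\B^{s-2}_{p,\infty}$ by condition~(\ref{s}) since $s-2<n/p$, I deduce
\[
\|\mathcal{B}[w]\|_{\B^{s-2}_{p,\infty}}\lesssim \|U\otimes w\|_{\B^{s-1}_{p,\infty}}\lesssim \|U\|_{L^{n,\infty}}\|w\|_{\Y},
\]
which is the first inequality.

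For the second, the triangle inequality reduces the matter to bounding $\|\Delta w\|_{\B^{s-2}_{p,\infty}}$. Using the $2$-homogeneous symbol $|\xi|^2$ again as in Subsubsection~2.3.1 (now with $a=2$ and base regularity $s$), condition (\ref{a}) reads $2>s-n/p$, which is immediate. Hence $\|\Delta w\|_{\B^{s-2}_{p,\infty}}\lesssim \|w\|_{\Y}$, and combining this with the bound for $\mathcal{B}[w]$ gives the asserted inequality for $\mathcal{A}[w]$.

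The argument is essentially a clean assembly of Lemma~\ref{product} and the multiplier estimates already in hand, so I do not anticipate any real obstacle. The only point that requires a moment's attention is the book-keeping of the index conditions (\ref{a}) and (\ref{s}) for each operator in the composition; the constraints $n/2<p<n$ and $0<s<1$ provide ample room, and in particular allow the three indices $s-2<s-1<s$ at which we apply $\mathbb{P}$, $\nabla$ and the product estimate respectively.
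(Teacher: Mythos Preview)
Your proof is correct and follows exactly the route indicated by the paper, which merely cites the multiplier framework of Subsection~2.3 together with Lemma~\ref{product}; you have simply written out the details of that citation, including the verification of conditions~(\ref{a}) and~(\ref{s}). The only cosmetic omission is that $\mathcal{B}[w]$ also contains the term $w\otimes U$, but since each entry is a scalar product $U_j w_i$ the same application of Lemma~\ref{product} handles it identically.
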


\begin{proof}
This is done by the argument in Subsection 2.2. and Lemma \ref{product}.
\end{proof}

\medskip

Let $s \in (0,1)$.
Combining the estimate for $\mathcal{B}$ above with (\ref{reso Laplacian 2}), we see that if $\|U\|_{L^{n,\infty}}$ is sufficient small, then $(1 - \RL(\lambda) \mathcal{B})$ is invertible on $\Y$ with
\[
\left(1 - \RL(\lambda) \mathcal{B} \right)^{-1}  = \dis \sum_{\ell=0}^\infty \left[\RL(\lambda) \mathcal{B} \right]^\ell \ \tnormal{and}\ \left\|(1 - \RL(\lambda) \mathcal{B})^{-1} \right\|_{\Y \to \Y} \le 1.
\]
Because it holds that $(\lambda - \mathcal{A}) = (\lambda + \Delta) (I - \RL(\lambda) \mathcal{B})$ as a map from $D(A)$ to $\Y$, we see that
\[
\RA(\lambda) := \left(1 - \RL(\lambda) \mathcal{B} \right)^{-1} \RL(\lambda)
\]
is an operator from $\Y$ to itself, and satisfies $(\lambda - \mathcal{A}) \RA(\lambda) = \RA(\lambda) (\lambda - \mathcal{A}) =I$ on $\Y$ with the estimate
\[
\|\RA(\lambda)\|_{\Y \to \Y} \lesssim |\lambda|^{-1}.
\]
Moreover, we establish smoothing estimates.
To see these, we make use of an auxiliary operator
\[
\mathcal{C}_\theta:= (-\Delta)^{-(2-\theta)/2} \mathcal{B} (-\Delta)^{-\theta/2}
\]
with $\theta \in [0,2]$.
This operator has the following properties.
\begin{lem} \label{C}
Let $s<n/p$ and $\theta \in [0,2]$ with $0 < s + \theta < 1$.
Assume that $U$ is sufficiently small in $L^{n,\infty}$.

(\rnum{1}):
\[
\|C_\theta\|_{\Y \to \Y} \lesssim \|U\|_{L^{n,\infty}}.
\]

(\rnum{2}):
\[
\dis \sum_{j=0}^\infty \left[\RL(\lambda) \mathcal{B} \right]^j = (-\Delta)^{-\theta/2} \sum_{j=0}^\infty \left[\RL(\lambda) (-\Delta) \mathcal{C}_\theta \right]^j (-\Delta)^{\theta/2}.
\]

(\rnum{3}):
\[
\dis \left\|\sum_{j=0}^\infty \left[\RL(\lambda) (-\Delta) \mathcal{C}_\theta \right]^j \right\|_{\Y \to \Y} \lesssim \|U\|_{L^{n,\infty}}.
\]
\end{lem}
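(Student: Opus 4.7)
The plan is to verify each claim using the Fourier multiplier calculus of Section 2.2 together with Lemma \ref{product}. For part (i), I would start with $w \in \Y$ and apply the three operators composing $\mathcal{C}_\theta$ in order. The rightmost $(-\Delta)^{-\theta/2}$ is a fractional integration which, by the bound of Subsection 2.2.1 with $a = -\theta$ (the condition $-\theta > s - n/p$ reduces to $s + \theta < n/p$, automatic since $s + \theta < 1 < n/p$ by $p < n$), maps $\Y$ to $\B^{s+\theta}_{p,\infty}$. Since $0 < s + \theta < 1$, Lemma \ref{product} then yields $\| U \otimes (-\Delta)^{-\theta/2} w \|_{\B^{s+\theta-1}_{p,\infty}} \lesssim \|U\|_{L^{n,\infty}} \| w \|_{\Y}$. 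The operator $\mathbb{P} \nabla$ is bounded $\B^{s+\theta-1}_{p,\infty} \to \B^{s+\theta-2}_{p,\infty}$ by the 0- and 1-homogeneous multiplier bounds. Finally $(-\Delta)^{-(2-\theta)/2}$ maps back into $\Y$; here the condition of Subsection 2.2.1 simplifies to $s < n/p$, which is a standing hypothesis. Composing the three bounds proves (i).

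For part (ii), the identity is algebraic: every operator in sight is a Fourier multiplier so they all commute, and $(-\Delta)^{\theta/2}(-\Delta)^{-\theta/2} = I$ on $\mathcal{S}_h'$ by the composition rule of Subsection 2.2.3. Unfolding $\mathcal{C}_\theta$ and cancelling,
\begin{align*}
(-\Delta)^{-\theta/2} \RL(\lambda)(-\Delta) \mathcal{C}_\theta (-\Delta)^{\theta/2}
&= (-\Delta)^{-\theta/2} \RL(\lambda) (-\Delta)^{\theta/2} \mathcal{B} (-\Delta)^{-\theta/2} (-\Delta)^{\theta/2} \\
&= \RL(\lambda) \mathcal{B}.
\end{align*}
Raising this to the $j$-th power and noting that all internal pairs $(-\Delta)^{\theta/2}(-\Delta)^{-\theta/2}$ telescope, one obtains $[\RL(\lambda)\mathcal{B}]^j = (-\Delta)^{-\theta/2}[\RL(\lambda)(-\Delta)\mathcal{C}_\theta]^j(-\Delta)^{\theta/2}$. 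Summing over $j$ gives (ii), provided both Neumann series converge in $\mathcal{L}(\Y)$; this is secured by (iii).

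For part (iii), I would first observe that $\RL(\lambda)(-\Delta) = -I + \lambda \RL(\lambda)$, so the resolvent estimate (\ref{reso Laplacian 1}) with $b=2$ gives $\|\RL(\lambda)(-\Delta)\|_{\Y \to \Y} \lesssim 1$ uniformly in $\lambda \in S_\omega$. Combined with (i), this yields $\|\RL(\lambda)(-\Delta) \mathcal{C}_\theta\|_{\Y \to \Y} \le C \|U\|_{L^{n,\infty}}$ uniformly in $\lambda$. For $\|U\|_{L^{n,\infty}}$ sufficiently small, say $C\|U\|_{L^{n,\infty}} \le 1/2$, the Neumann series converges in operator norm, and summing the geometric bound $(C\|U\|_{L^{n,\infty}})^j$ for $j \ge 1$ gives the stated contribution $\lesssim \|U\|_{L^{n,\infty}}$. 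The main delicacy throughout is not the formal computation but its justification: in (ii) one must verify that each composition $[\RL(\lambda)(-\Delta)\mathcal{C}_\theta]^j$ and its conjugates by $(-\Delta)^{\pm\theta/2}$ are bounded operators between the appropriate Besov spaces, by checking the homogeneity conditions from Subsections 2.2.1--2.2.3 at every intermediate step; once this bookkeeping is done, (i) and (iii) follow as short corollaries of Lemma \ref{product} and the resolvent estimates.
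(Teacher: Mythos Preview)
Your argument is correct and matches the paper's approach closely: part (i) is the paper's one-line appeal to Lemma \ref{AB} and the mapping properties of $(-\Delta)^{-a/2}$, just unpacked; part (ii) is the same telescoping identity; and in part (iii) your identity $\RL(\lambda)(-\Delta) = -I + \lambda\RL(\lambda)$ is an equally valid route to the uniform bound the paper obtains by estimating the symbol $|\xi|^2(\lambda-|\xi|^2)^{-1}$ directly.

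One correction of phrasing, not of mathematics: in (ii) you write that ``every operator in sight is a Fourier multiplier so they all commute.'' This is false --- $\mathcal{B}$ contains pointwise multiplication by $U$ and is certainly not a Fourier multiplier, nor does it commute with $\RL(\lambda)$ or $(-\Delta)^{\pm\theta/2}$. Fortunately your computation never uses that; what you actually use (and what the paper invokes) is only the commutativity of $\RL(\lambda)$ with $(-\Delta)^{\pm\theta/2}$, together with the cancellation $(-\Delta)^{\theta/2}(-\Delta)^{-\theta/2}=I$. The telescoping in $[\RL(\lambda)\mathcal{B}]^j$ works precisely because the conjugating factors $(-\Delta)^{\pm\theta/2}$ sit \emph{between} successive copies of $\mathcal{B}$, never needing to pass through $\mathcal{B}$. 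You should rewrite that sentence accordingly.
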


\begin{proof}
(\rnum{1}): This can be seen from the mapping properties of the negative powers of $(-\Delta)$ in Subsection 2.3 and Lemma \ref{AB}.

(\rnum{2}): Using the commutativity of $\RL(\lambda)$ and $(-\Delta)^{-\theta/2}$, we have
\[
\left[\RL(\lambda) \mathcal{B} \right]^2 = (-\Delta)^{-\theta/2} \left[\RL(\lambda) (-\Delta) \mathcal{C}_\theta \right]^2 (-\Delta)^{\theta/2}.
\]
Thus, the desired equality holds.

(\rnum{3}): Since
\[
\RL(\lambda) (-\Delta)g = \dis \sum_{j \in \Z} \mathcal{F}^{-1} [m(\xi) \vphi(\xi/2^j) \hat{g}]
\]
with $m(\xi) := (\lambda + |\xi|^2)^{-1} |\xi|^2$ and $|\partial^\alpha m(\xi)| \lesssim \min(|\lambda|^{-1}, |\xi|^{-2}) |\xi|^{2-|\alpha|} \le |\xi|^{-|\alpha|}$ for all $\xi \not = 0$, one has the boundedness $\|\RL(\lambda) (-\Delta)g\|_{\Y} \lesssim \|g\|_{\Y}$.
Combining this with (\rnum{1}) and the smallness of $\|U\|_{L^{n,\infty}}$, one has the absolutely convergence of the sum.
\end{proof}

\begin{lem} \label{resolvent}
Let $-2<s<1$ and $-2 \le \tau \le 2$ with $-2<s+\tau < 1$.
Assume that $\|U\|_{L^{n,\infty}}$ is sufficiently small.

(\rnum{1}):
\[
\|\RL(\lambda) \mathcal{B} \RA(\lambda)\|_{\Y \to \B^{s+\tau}_{p,\infty}} \lesssim |\lambda|^{-(2-\tau)/2} \|U\|_{L^{n,\infty}}.
\]

(\rnum{2}): If $\tau \ge 0$, then
\[
\|\RA(\lambda)\|_{\Y \to \B^{s+\tau}_{p,\infty}} \lesssim |\lambda|^{-(2-\tau)/2} (1+\|U\|_{L^{n,\infty}}).
\]
\end{lem}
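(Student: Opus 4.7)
My plan is to derive (\rnum{2}) from (\rnum{1}), and to prove (\rnum{1}) by invoking the conjugation formula of Lemma \ref{C}\,(\rnum{2}), which bypasses the restriction $s\in(0,1)$ built into Lemma \ref{AB}. For (\rnum{2}), the resolvent identity $\RA(\lambda)=\RL(\lambda)+\RL(\lambda)\mathcal{B}\,\RA(\lambda)$ combined with the smoothing bound $\|\RL(\lambda)f\|_{\B^{s+\tau}_{p,\infty}}\lesssim|\lambda|^{-(2-\tau)/2}\|f\|_{\B^s_{p,\infty}}$ for $\tau\in[0,2]$, obtained by interpolating \eqref{reso Laplacian 1} and \eqref{reso Laplacian 2}, reduces the matter to (\rnum{1}); the free term produces the ``$1$'' and the second term the ``$\|U\|_{L^{n,\infty}}$'' in the factor $1+\|U\|_{L^{n,\infty}}$.

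For (\rnum{1}), I use $\RL(\lambda)\mathcal{B}\,\RA(\lambda)=\RA(\lambda)-\RL(\lambda)$ together with Lemma \ref{C}\,(\rnum{2}) to rewrite, for a $\theta\in[0,2]$ to be chosen,
\[
\RL(\lambda)\mathcal{B}\,\RA(\lambda)=(-\Delta)^{-\theta/2}\,\RL(\lambda)(-\Delta)\,\mathcal{C}_\theta\,S_\theta\,(-\Delta)^{\theta/2}\RL(\lambda),
\]
where $S_\theta:=\sum_{\ell=0}^\infty(\RL(\lambda)(-\Delta)\mathcal{C}_\theta)^\ell$ is bounded on $\B^\sigma_{p,\infty}$ by a constant whenever $\sigma<n/p$ and $0<\sigma+\theta<1$, by Lemma \ref{C}\,(\rnum{1}),(\rnum{3}) and the smallness of $\|U\|_{L^{n,\infty}}$. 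The composition-operator estimates of Subsection 2.3 give, for any $\eta_1\in[0,2]$, that $(-\Delta)^{\theta/2}\RL(\lambda)$ maps $\B^s_{p,\infty}$ into $\B^{s+\eta_1-\theta}_{p,\infty}$ with norm $|\lambda|^{-(2-\eta_1)/2}$, and, for any $\eta_2\in[-2,0]$, that $\RL(\lambda)(-\Delta)$ maps $\B^\sigma_{p,\infty}$ into $\B^{\sigma+\eta_2}_{p,\infty}$ with norm $|\lambda|^{\eta_2/2}$. Choosing $\eta_1,\eta_2$ with $\eta_1+\eta_2=\tau$ and chaining the resulting bounds, together with the $\mathcal{C}_\theta S_\theta$ estimate in the middle and $(-\Delta)^{-\theta/2}:\B^{s+\tau-\theta}_{p,\infty}\to\B^{s+\tau}_{p,\infty}$ at the end, yields the required inequality with total constant $|\lambda|^{-(2-\eta_1)/2}\cdot|\lambda|^{\eta_2/2}\cdot\|U\|_{L^{n,\infty}}=|\lambda|^{-(2-\tau)/2}\|U\|_{L^{n,\infty}}$, the small factor coming from $\mathcal{C}_\theta$ via Lemma \ref{C}\,(\rnum{1}).

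The main obstacle is the parameter bookkeeping. For every admissible pair $(s,\tau)$ with $-2<s<1$, $-2\le\tau\le 2$ and $-2<s+\tau<1$, one must produce $\eta_1\in[0,2]$, $\eta_2\in[-2,0]$ with $\eta_1+\eta_2=\tau$, and $\theta\in[0,2]$, such that the intermediate space $\B^{s+\tau-\eta_2-\theta}_{p,\infty}$ lies in the domain of $\mathcal{C}_\theta$, namely $0<s+\tau-\eta_2<1$ and $s+\tau-\eta_2-\theta<n/p$. Writing the first constraint as $\eta_2\in(s+\tau-1,s+\tau)$ and intersecting with $[\max(-2,\tau-2),\min(0,\tau)]$, a short case analysis separating $\tau\ge 0$ from $\tau<0$ shows, using the hypotheses $-2<s+\tau<1$ and $s>-2$, that the resulting interval is always nonempty; the remaining condition $s+\tau-\eta_2-\theta<n/p$ is automatic since $s+\tau-\eta_2<1<n/p$ (as $p<n$) and $\theta\ge 0$.
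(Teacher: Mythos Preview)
Your proposal is correct and follows essentially the same route as the paper. Your factorization $(-\Delta)^{-\theta/2}\RL(\lambda)(-\Delta)\,\mathcal{C}_\theta S_\theta\,(-\Delta)^{\theta/2}\RL(\lambda)$ coincides with the paper's $\RL(\lambda)(-\Delta)^{(2-\theta)/2}\,\mathcal{C}_\theta S_\theta\,(-\Delta)^{\theta/2}\RL(\lambda)$ by commutativity, and your parameters $(\eta_1,\eta_2)$ are simply the paper's $(2\eta,\,2\tilde\eta-2)$; your case analysis for the nonemptiness of the admissible parameter range is equivalent to the paper's intersection condition $\eta\in[0,1]\cap(-s/2,(1-s)/2)\cap(\tau/2,(\tau+2)/2)$, and part~(\rnum{2}) is handled identically via the resolvent identity \eqref{reso equ} and interpolation of \eqref{reso Laplacian 1}--\eqref{reso Laplacian 2}.
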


\begin{proof}
(\rnum{1}):
The conditions on exponents ensure the existence of $\eta \in [0,1] \cap (-s/2,(1-s)/2) \cap (\tau/2,(\tau+2)/2)$.
Thus, $\tilde{\eta}:=(\tau +2)/2 - \eta \in [0,1]$ enjoys $\tau + 2 = 2(\eta + \tilde{\eta})$.
Then, we take $\theta \in [0,2]$ so that $s-n/p + 2\eta < \theta$.

Following Kozono-Yamazaki \cite{K-Y2}, we rewrite
\[
\RL(\lambda) \mathcal{B} \RA(\lambda) = \RL(\lambda) (-\Delta)^{(2-\theta)/2} \mathcal{C}_\theta \dis \sum_{j=0}^\infty \left[\RL(\lambda) (-\Delta) \mathcal{C}_\theta \right]^j (-\Delta)^{\theta/2} \RL(\lambda),
\]
and then we can obtain
\[
\begin{cases}
\; \left\|\RL(\lambda) (-\Delta)^{(2-\theta)/2} \right\|_{\B^{s+\tau+(2-\theta)-2 \tilde{\eta}}_{p,\infty} \to \B^{s+\tau}_{p,\infty}} \lesssim |\lambda|^{-(1-\tilde{\eta})} \\
\; \left\|\mathcal{C}_\theta \dis \sum_{j=0}^\infty \left[\RL(\lambda) (-\Delta) \mathcal{C}_\theta \right]^j \right\|_{\B^{s+\tau+(2-\theta)-2 \tilde{\eta}}_{p,\infty} \to \B^{s+\tau+(2-\theta)-2 \tilde{\eta}}_{p,\infty}} \lesssim \|U\|_{L^{n,\infty}} \\
\; \left\|(-\Delta)^{\theta/2} \RL(\lambda) \right\|_{\Y \to \B^{s-\theta+2\eta}_{p,\infty}} \lesssim |\lambda|^{-(1-\eta)}.
\end{cases}
\]
Here, we remark that $s+\tau+(2-\theta) - 2 \tilde{\eta} = s-\theta+2\eta$.
The first and last inequalities follows from the argument in Subsection 2.3.
The second one is combination of (\rnum{1}) and (\rnum{3}) in Lemma \ref{C}. 
Thus, the desired inequality is obtained.

\medskip

(\rnum{2}):
Interpolating (\ref{reso Laplacian 1}) and (\ref{reso Laplacian 2}), we have
\[
\|\RL(\lambda) f\|_{\B^{s+\tau}_{p,\infty}} \lesssim |\lambda|^{-(2-\tau)/2} \|f\|_{\Y}.
\]
Since
\begin{equation} \label{reso equ}
\RA(\lambda) = \RL(\lambda) + \RL(\lambda) \mathcal{B} \RA(\lambda),
\end{equation}
the estimate for $\RA(\lambda)$ is showed by this and (\rnum{1}).
\end{proof}

\medskip

Now we are position to define the semigroup with respect to $\mathcal{A}$ by the Dunford integral
\[
e^{-t\mathcal{A}} := \dfrac{1}{2\pi i} \dint_\Gamma e^{-t \lambda} \RA(\lambda) d\lambda\quad (t>0),
\]
where $\Gamma := \Gamma_- \cup \Gamma_0 \cup \Gamma_+$,
\[
\Gamma_0 := \{z \in \C; z=e^{i\psi},\ |\psi| \ge \theta \},\ \Gamma_{\pm} := \{z \in \C; z= re^{\pm i \theta}, r \ge 1 \},
\] 
and $\Gamma_\pm$ are oriented upwards and $\Gamma_0$ is counterclockwise, with some $\theta \in (\omega, \pi/2)$.
We refer \cite{Si} and \cite{L} for the standard properties of the semigroup.

\medskip

Next estimates for the semigroup is used in the proof of Theorem \ref{main}.
Similar estimates for the heat semigroup were showed by Kozono-Ogawa-Taniuchi \cite{K-O-T}.
\begin{lem} \label{semigroup}
Let $-2<s<1$.
If $\|U\|_{L^{n,\infty}}$ is sufficiently small, then the the following estimates hold for $t>0$. \\
({\rnum{1}}):
\[
\|e^{-t \mathcal{A}}f\|_{\Y} \lesssim (1+\|U\|_{L^{n,\infty}}) \|f\|_{\Y}.
\]

\noindent
(\rnum{2}): For $\tau \in (0,1-s)$,
\[
\|e^{-t\mathcal{A}}f\|_{\B^{s+\tau}_{p,1}} \lesssim t^{-\tau/2} (1+\|U\|_{L^{n,\infty}}) \|f\|_{\Y}.
\]

\noindent
({\rnum{3}}): For $\tau \in [0,2]$ with $\tau \le 2+s$,
\[
\|e^{-t \mathcal{A}} f - f\|_{\B^{s-\tau}_{p,\infty}} \lesssim t^{\tau/2} \|f\|_{\Y}.
\]
\end{lem}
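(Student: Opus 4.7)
The plan is to derive (\rnum{1}) and (\rnum{2}) from the Dunford representation $e^{-t\mathcal{A}}=\frac{1}{2\pi i}\int_\Gamma e^{-t\lambda}\RA(\lambda)\,d\lambda$ combined with the resolvent bounds of Lemma~\ref{resolvent}, while (\rnum{3}) will follow from a Duhamel comparison with the free heat semigroup. For part (\rnum{1}), I would feed $\|\RA(\lambda)\|_{\Y\to\Y}\lesssim|\lambda|^{-1}(1+\|U\|_{L^{n,\infty}})$ from Lemma~\ref{resolvent}(\rnum{2}) with $\tau=0$ into the Dunford integral. Since the fixed arc of radius one does not yield a uniform bound in $t$, I would deform $\Gamma$ via Cauchy's theorem into a $t$-dependent contour whose arc has radius $1/t$. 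On this rescaled arc the product of $O(1/t)$ length, $O(t)$ resolvent bound, and $|e^{-t\lambda}|\le e$ is $O(1)$, while the substitution $u=tr\cos\theta$ reduces the ray integral to $\int_1^\infty e^{-u}u^{-1}\,du$, finite.

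For part (\rnum{2}), the same rescaled-contour Dunford argument, now invoking Lemma~\ref{resolvent}(\rnum{2}) at positive exponent $\tau'>0$, first gives the intermediate estimate
\[
\|e^{-t\mathcal{A}}f\|_{\B^{s+\tau'}_{p,\infty}}\lesssim t^{-\tau'/2}(1+\|U\|_{L^{n,\infty}})\|f\|_{\Y},\qquad\tau'\in(0,1-s).
\]
To upgrade the target index from $\infty$ to $1$ I would apply the Duhamel identity
\[
e^{-t\mathcal{A}}f=e^{t\Delta}f-\int_0^t e^{(t-s)\Delta}\mathcal{B}[e^{-s\mathcal{A}}f]\,ds,
\]
bounding the free term by the classical heat smoothing $\|e^{t\Delta}f\|_{\B^{s+\tau}_{p,1}}\lesssim t^{-\tau/2}\|f\|_{\Y}$ (from $\sum_j 2^{j\tau}e^{-ct2^{2j}}\lesssim t^{-\tau/2}$). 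For the integral I would use Lemma~\ref{AB} with smoothness $s+\tau'$ in place of $s$ to obtain $\|\mathcal{B}[e^{-s\mathcal{A}}f]\|_{\B^{s+\tau'-2}_{p,\infty}}\lesssim\|U\|_{L^{n,\infty}}\,s^{-\tau'/2}\|f\|_{\Y}$, together with heat smoothing $\|e^{(t-s)\Delta}:\B^{s+\tau'-2}_{p,\infty}\to\B^{s+\tau}_{p,1}\|\lesssim(t-s)^{-(\tau+2-\tau')/2}$. Choosing any $\tau'\in(\tau,1-s)$ makes the resulting beta-type integral $\int_0^t(t-s)^{-(\tau+2-\tau')/2}s^{-\tau'/2}\,ds$ equal to a constant multiple of $t^{-\tau/2}$.

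For part (\rnum{3}), I would split once more via Duhamel. The heat modulus $\|e^{t\Delta}f-f\|_{\B^{s-\tau}_{p,\infty}}\lesssim t^{\tau/2}\|f\|_{\Y}$ follows from the Littlewood--Paley bound $\|\vphi_j(e^{t\Delta}f-f)\|_{L^p}\lesssim\min(t2^{2j},1)\|\vphi_jf\|_{L^p}$ via $\sup_j 2^{-j\tau}\min(t2^{2j},1)=t^{\tau/2}$ for $\tau\in(0,2]$. The remaining convection integral is handled by Lemma~\ref{AB}, the uniform $\Y$-bound on $e^{-s\mathcal{A}}f$ from part~(\rnum{1}), and the heat smoothing $\|e^{(t-s)\Delta}:\B^{s-2}_{p,\infty}\to\B^{s-\tau}_{p,\infty}\|\lesssim(t-s)^{-(2-\tau)/2}$; the integrable singularity $(2-\tau)/2<1$ produces $\int_0^t(t-s)^{-(2-\tau)/2}\,ds\sim t^{\tau/2}$.

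The hard part will be (\rnum{2}): a one-step Duhamel from $\B^{s-2}_{p,\infty}$ to $\B^{s+\tau}_{p,1}$ would require heat smoothing at rate $(\tau+2)/2>1$, producing a non-integrable singularity at $s=t$. The two-step bootstrap, which uses the intermediate $\B^{s+\tau'}_{p,\infty}$ bound with $\tau'>\tau$ to extract extra smoothness from $e^{-s\mathcal{A}}f$ and thereby reduce the smoothing exponent to $(\tau+2-\tau')/2<1$, is the device that restores time-integrability while preserving the correct $t^{-\tau/2}$ rate.
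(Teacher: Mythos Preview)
Your argument for (\rnum{1}) is the paper's. For (\rnum{2}) and (\rnum{3}) you take a genuinely different route, and while the strategy is reasonable, it does not cover the full stated range $-2<s<1$.

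For (\rnum{2}) the paper does \emph{not} bootstrap through Duhamel. After obtaining the intermediate bound $\|e^{-t\mathcal{A}}f\|_{\B^{s+\tau'}_{p,\infty}}\lesssim t^{-\tau'/2}(1+\|U\|_{L^{n,\infty}})\|f\|_{\Y}$ exactly as you do, it simply picks $0<\tau_1<\tau<\tau_2<1-s$ and applies the Machihara--Ozawa interpolation inequality
\[
\|g\|_{\B^{s+\tau}_{p,1}}\lesssim\|g\|_{\B^{s+\tau_1}_{p,\infty}}^{1-\theta}\|g\|_{\B^{s+\tau_2}_{p,\infty}}^{\theta},\qquad \tau=(1-\theta)\tau_1+\theta\tau_2,
\]
which upgrades the third index from $\infty$ to $1$ in one line and works for every $s\in(-2,1)$. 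Your Duhamel scheme instead invokes Lemma~\ref{AB} at smoothness $s+\tau'$, forcing $s+\tau'\in(0,1)$, and the beta integral $\int_0^t(t-\sigma)^{-(\tau+2-\tau')/2}\sigma^{-\tau'/2}\,d\sigma$ needs $\tau'<2$ for convergence at $\sigma=0$. When $s<-1$ and $\tau$ is near $1-s>2$ there is no admissible $\tau'$, so your argument leaves a gap in the stated range.

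For (\rnum{3}) the paper again stays on the resolvent side: it writes $e^{-t\mathcal{A}}f-f=(e^{t\Delta}f-f)+\frac{1}{2\pi i}\int_\Gamma e^{-t\lambda}\RL(\lambda)\mathcal{B}\RA(\lambda)f\,d\lambda$ via the identity $\RA=\RL+\RL\mathcal{B}\RA$, and bounds the second term directly by Lemma~\ref{resolvent}(\rnum{1}) with $-\tau$ in place of $\tau$, valid for all $-2<s<1$. Your Duhamel version is the time-domain form of the same identity, but you estimate the integrand by factoring through $\|\mathcal{B}[\,\cdot\,]\|_{\B^{s-2}_{p,\infty}}$ via Lemma~\ref{AB}, which is only stated for $s\in(0,1)$; hence your (\rnum{3}) as written does not cover $s\le 0$. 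All applications in the paper have $s\in(0,1)$, so your proofs would suffice there, but the lemma as stated needs the paper's resolvent-level estimates (or the interpolation trick) to reach the full range.
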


\begin{rem}
\begin{enumerate}
\item
Especially, (\rnum{1}) with large $\tau$ is important in Lemma \ref{critical} below.
In there, we use (\rnum{1}) with $\tau > 2$.

\item
Remark that we assume that $U \in L^{n,\infty}$ rather than $U \in L^n$.

\item
It seems that these estimates can not be deduced from similar results in Besov-Morrey spaces by Kozono-Yamazaki \cite{K-Y2}.
\end{enumerate}
\end{rem}

\begin{proof}
(\rnum{1}): This is an easy consequence of Lemma \ref{resolvent}.

(\rnum{2}): This inequality with $\B^{s+\tau}_{p,\infty}$ in the place of $\B^{s+\tau}_{p,1}$ immediately follows from the same reason as (\rnum{1}).
Applying an interpolation inequality by Machihara-Ozawa \cite{M-O}, we deduce the desired inequality as follows; if $0 < \tau_1 < \tau < \tau_2 < 1-s,\ \tau = (1-\theta) \tau_1 + \theta \tau_2$ with $\theta \in (0,1)$
\begin{align*}
\|e^{-t\mathcal{A}}f\|_{\B^{s + \tau}_{p,1}} & \lesssim \|e^{-t\mathcal{A}}f\|_{\B^{s + \tau_1}_{p,\infty}}^{1-\theta} \|e^{-t\mathcal{A}}f\|_{\B^{s + \tau_2}_{p,\infty}}^\theta \\
& \lesssim t^{-\tau/2} (1+\|U\|_{L^{n,\infty}}) \|f\|_{\Y}.
\end{align*}

(\rnum{3}): To show this, we use the identity (\ref{reso equ}) and thus
\begin{align*}
e^{-t \mathcal{A}}f-f = e^{t\Delta}f - f + \dfrac{1}{2\pi i} \dint_\Gamma e^{-t \lambda} \RL(\lambda) \mathcal{B} \RA(\lambda) f d\lambda. 
\end{align*}
Because $m(\xi) := e^{-|\xi|^2/4} -1$ fulfills $|\partial^\alpha m(\xi)| \lesssim |\xi|^{2-|\alpha|}$, the operator norm of the first term is controlled by $t^{\tau/2}$.
On the other hand, (\rnum{1}) in Lemma \ref{resolvent} yields
\[
\left\|\dint_\Gamma e^{-t \lambda} \RL(\lambda) \mathcal{B} \RA(\lambda) d\lambda \right\|_{\B^s_{p,\infty} \to \B^{s-\tau}_{p,\infty}} \lesssim t^{\tau/2} \|U\|_{L^{n,\infty}}.
\]
Thus, the proof is completed.
\end{proof}

\medskip

In next lemma, we see the differentiability of $e^{-t\mathcal{A}}f$ with respect to $t$.

\begin{lem} \label{reso last}
Let $0<s<1$ and $0 \le \tau \le 2$ with $\tau <s$.
Suppose that $\|U\|_{L^{n,\infty}}$ is sufficiently small.
Then,
\[
\left\|\dfrac{e^{-t \mathcal{A}}f-f}{t} + \mathcal{A}f \right\|_{\B^{s-2-\tau}_{p,\infty}} \lesssim t^{\tau/2} \|f\|_{\Y}.
\]
\end{lem}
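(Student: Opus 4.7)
The plan is to write $(e^{-t\mathcal{A}}f - f)/t + \mathcal{A}f$ as the Bochner average of $e^{-\sigma \mathcal{A}} \mathcal{A}f - \mathcal{A}f$ over $\sigma \in (0,t)$, and then estimate the integrand by applying Lemma \ref{semigroup}(\rnum{3}) at the lower regularity level $s-2$.

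To begin, since $s \in (0,1)$ we have $g := \mathcal{A}f \in \B^{s-2}_{p,\infty}$ with $\|g\|_{\B^{s-2}_{p,\infty}} \lesssim (1+\|U\|_{L^{n,\infty}}) \|f\|_{\Y}$ by Lemma \ref{AB}. Next I would establish the fundamental-theorem-of-calculus identity
\[
e^{-t\mathcal{A}}f - f = -\int_0^t e^{-\sigma \mathcal{A}} \mathcal{A}f \, d\sigma \quad \text{in } \B^{s-2}_{p,\infty},
\]
by differentiating the Dunford integral representation of $e^{-\sigma \mathcal{A}}f$ in $\sigma$, using the resolvent identity $\mathcal{A} \RA(\lambda)f = \lambda \RA(\lambda)f - f$, and commuting $\mathcal{A}$ with the semigroup via $\mathcal{A} \RA(\lambda)f = \RA(\lambda) \mathcal{A}f$ (both sides equal $\lambda \RA(\lambda)f - f$, where $\RA(\lambda)$ is extended to $\B^{s-2}_{p,\infty}$ by Lemma \ref{resolvent}(\rnum{2}) applied at regularity $s-2$). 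The constant of integration is pinned down by $e^{-t\mathcal{A}}f \to f$ in $\B^{s-2}_{p,\infty}$ as $t \to 0^+$, which is Lemma \ref{semigroup}(\rnum{3}) with $\tau = 2$.

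Rearranging this identity gives
\[
\frac{e^{-t\mathcal{A}}f - f}{t} + \mathcal{A}f = -\frac{1}{t} \int_0^t \bigl(e^{-\sigma \mathcal{A}} \mathcal{A}f - \mathcal{A}f\bigr) \, d\sigma.
\]
I now apply Lemma \ref{semigroup}(\rnum{3}) with the role of ``$s$'' played by $s-2 \in (-2,-1)$ and the same $\tau$; this is admissible because $0 < s < 1$ implies $-2 < s - 2 < 1$, and $\tau \in [0,2]$ with $\tau < s$ yields exactly the lemma's hypothesis $\tau \le 2 + (s-2) = s$. This produces
\[
\|e^{-\sigma \mathcal{A}} \mathcal{A}f - \mathcal{A}f\|_{\B^{s-2-\tau}_{p,\infty}} \lesssim \sigma^{\tau/2} \|\mathcal{A}f\|_{\B^{s-2}_{p,\infty}} \lesssim \sigma^{\tau/2} \|f\|_{\Y},
\]
so integrating $\sigma^{\tau/2}$ over $(0,t)$ and dividing by $t$ yields the desired $t^{\tau/2} \|f\|_{\Y}$ bound.

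The main obstacle is the rigorous justification of the FTC identity, since $f$ is not assumed to lie in the abstract domain $D(\mathcal{A})$. This is resolved by interpreting $\mathcal{A}$ consistently as the bounded operator $\B^s_{p,\infty} \to \B^{s-2}_{p,\infty}$ coming from Lemma \ref{AB} together with the boundedness of $(-\Delta)$ on Besov scales; the identities $(\lambda - \mathcal{A})\RA(\lambda) = \RA(\lambda)(\lambda - \mathcal{A}) = I$ pass to this extension, which legitimizes moving $\mathcal{A}$ in and out of the Dunford integral and commuting it with the semigroup at the regularity level $s - 2$.
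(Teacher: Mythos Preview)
Your argument is correct and takes a genuinely different route from the paper's.

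The paper works directly with the Dunford contour: starting from
\[
\dfrac{e^{-t\mathcal{A}}f-f}{t}+\mathcal{A}f=\dfrac{1}{2\pi i t}\dint_\Gamma e^{-t\lambda}\bigl[\RA(\lambda)-\lambda^{-1}\bigr]f\,d\lambda+\mathcal{A}f,
\]
it applies the resolvent identity $\RA(\lambda)-\lambda^{-1}=\lambda^{-1}\mathcal{A}\RA(\lambda)=\lambda^{-1}\RA(\lambda)\mathcal{A}$ twice (together with a rescaling $\lambda\mapsto\lambda/t$) to arrive at the closed form
\[
\dfrac{1}{2\pi i}\dint_{\Gamma'}\dfrac{e^{-\lambda}}{\lambda^{2}}\,\mathcal{A}\,\RA(\lambda/t)\,\mathcal{A}f\,d\lambda,
\]
and then estimates this using Lemmas~\ref{AB} and~\ref{resolvent}. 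Your approach instead reduces everything to an already-proved statement: you write the quantity as the Bochner mean $-t^{-1}\int_0^t(e^{-\sigma\mathcal{A}}g-g)\,d\sigma$ with $g=\mathcal{A}f\in\B^{s-2}_{p,\infty}$, and invoke Lemma~\ref{semigroup}(\rnum{3}) at the shifted index $s-2$. The index check is exactly right: $0<s<1$ gives $-2<s-2<1$, and the constraint $\tau\le 2+(s-2)=s$ of that lemma is precisely the hypothesis $\tau<s$ here (strict inequality even gives room in Lemma~\ref{resolvent}(\rnum{1}), which underlies the proof of Lemma~\ref{semigroup}(\rnum{3})).

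What each buys: the paper's contour computation is self-contained and avoids any discussion of how the semigroup acts across the Besov scale. Your argument is more modular and conceptually cleaner, but it trades the contour manipulation for the FTC identity and for the consistency of $e^{-\sigma\mathcal{A}}$ and $\RA(\lambda)$ on $\B^{s}_{p,\infty}$ versus $\B^{s-2}_{p,\infty}$. That consistency is available here because $\RA(\lambda)=(1-\RL(\lambda)\mathcal{B})^{-1}\RL(\lambda)$ is built from Fourier multipliers and a Neumann series whose definition does not depend on $s$; your last paragraph identifies this correctly. Either proof is acceptable.
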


\begin{proof}
Similarly as in \cite{K-Y2} and \cite{P-P}, we write
\begin{align*}
\dfrac{e^{-t \mathcal{A}}f-f}{t} + \mathcal{A}f & = \dfrac{1}{2 \pi i t} \dint_\Gamma e^{-t\lambda} \left[\RA(\lambda) - \lambda^{-1} \right] f d\lambda + \mathcal{A} f \\
& = \dfrac{1}{2 \pi i t} \dint_{\Gamma^\prime} \dfrac{e^{-\lambda}}{\lambda} \RA(\lambda/t) \mathcal{A}f d\lambda - \dfrac{1}{2\pi i} \dint_{\Gamma^\prime} \dfrac{e^{-\lambda}}{\lambda^2} \mathcal{A}f d\lambda \\
& = \dfrac{1}{2 \pi i} \dint_{\Gamma^\prime} \dfrac{e^{-\lambda}}{\lambda^2} \mathcal{A} \RA(\lambda/t) \mathcal{A} f d\lambda.
\end{align*}
where $\Gamma^\prime := \{t\lambda ; \lambda \in \Gamma \}$.
We have used the identities $\RA(\lambda) - \lambda^{-1} = \lambda^{-1} \RA(\lambda) \mathcal{A} = \lambda^{-1} \mathcal{A} \RA(\lambda)$ in the last two equalities.
Using Lemmas \ref{AB} and \ref{resolvent}, we conclude as follows;
\[
\left\| \dfrac{e^{-t \mathcal{A}}f-f}{t} + \mathcal{A}f \right\|_{\B^{s-2-\tau}_{p,\infty}} \lesssim t^{\tau/2} (1+\|U\|_{L^{n,\infty}})^2 \dint_{\Gamma^\prime} e^{-\lambda} \lambda^{-(2+\tau/2)} d\lambda \|f\|_{\Y}.
\]
\end{proof}

\medskip

\subsection{A critical estimate}
We end this section with a critical estimate for the semigroup.
This type estimate was firstly proved by Meyer \cite{M} in $L^{n,\infty}$.
Yamazaki \cite{Y} independently proved it, on domains, applying estimates in the dual spaces.
Our estimate is an analogy of one in \cite{S-T-V} and \cite{T}, in there the estimate for heat semigroup was considered.

\begin{lem} \label{critical}
Let $0<s<1$.
It holds that
\[
\left\|\dint_{t_0}^t e^{-(t-\tau) \mathcal{A}} \mathbb{P} g(\tau) d\tau \right\|_{\B^s_{p,\infty}} \lesssim \dis \sup_{t_0 < \tau < t} \|g(\tau)\|_{\B^{s-2}_{p,\infty}}
\]
for $- \infty \le t_0 < t < \infty$.
\end{lem}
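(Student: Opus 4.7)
The plan is to reduce the claim to the corresponding estimate for the (unperturbed) heat semigroup and then absorb the perturbation by the smallness of $\|U\|_{L^{n,\infty}}$. The heart of the matter is a Littlewood--Paley argument in which the order of $\sup_j$ and $\int d\tau$ is reversed; this reversal is what saves the non-integrable $(t-\tau)^{-1}$ singularity one would obtain by a direct application of Lemma \ref{semigroup}(\rnum{2}) with $\tau=2$.

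\textbf{Step 1 (heat--semigroup version).} First I would prove the analogous inequality for $e^{t\Delta}$:
\[
\left\|\int_{t_0}^t e^{(t-\tau)\Delta}\mathbb{P} h(\tau)\,d\tau\right\|_{\B^s_{p,\infty}}\lesssim \sup_{t_0<\tau<t}\|h(\tau)\|_{\B^{s-2}_{p,\infty}}.
\]
Writing the $\B^s_{p,\infty}$--norm as $\sup_j 2^{js}\|\vphi_j(D)\cdot\|_{L^p}$, applying Minkowski's inequality in $\tau$ inside the $L^p$--norm, and using the frequency--localized bound $\|\vphi_j(D)e^{(t-\tau)\Delta}\|_{L^p\to L^p}\lesssim e^{-c(t-\tau)2^{2j}}$ (from the $L^1$--norm of $\mathscr{F}^{-1}[e^{-(t-\tau)|\cdot|^2}\vphi(\cdot/2^j)]$) together with $\|\vphi_j(D)\mathbb{P} h(\tau)\|_{L^p}\lesssim 2^{-j(s-2)}\|h(\tau)\|_{\B^{s-2}_{p,\infty}}$, the time integration produces $\int_{t_0}^{t}e^{-c(t-\tau)2^{2j}}d\tau\lesssim 2^{-2j}$. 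This cancels the weight $2^{js}\cdot 2^{-j(s-2)}=2^{2j}$ and yields a bound independent of $j$.

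\textbf{Step 2 (reduction by Duhamel).} Set $I(t):=\dint_{t_0}^t e^{-(t-\tau)\mathcal{A}}\mathbb{P} g(\tau)\,d\tau$. Since $\partial_t I-\Delta I=\mathbb{P} g-\mathcal{B}I$ with $I(t_0)=0$, Duhamel's formula with respect to the heat semigroup gives
\[
I(t)=\dint_{t_0}^t e^{(t-\tau)\Delta}\mathbb{P} g(\tau)\,d\tau\;-\;\dint_{t_0}^t e^{(t-\tau)\Delta}\mathcal{B}I(\tau)\,d\tau.
\]

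\textbf{Step 3 (closing).} Apply Step 1 to both integrals on the right. The first is bounded by $M:=\sup_\tau\|g(\tau)\|_{\B^{s-2}_{p,\infty}}$. For the second, Lemma \ref{AB} gives $\|\mathcal{B}I(\tau)\|_{\B^{s-2}_{p,\infty}}\lesssim \|U\|_{L^{n,\infty}}\|I(\tau)\|_{\B^s_{p,\infty}}$, so it is bounded by $C\|U\|_{L^{n,\infty}}\sup_{\tau}\|I(\tau)\|_{\B^s_{p,\infty}}$. The identity holds also for any $t'\in(t_0,t]$ in place of $t$; after an a priori regularization (approximating $g$ by Schwartz data) to ensure that $\sup_{t'\in(t_0,t]}\|I(t')\|_{\B^s_{p,\infty}}$ is finite, one takes the supremum in $t'$ and absorbs the $C\|U\|_{L^{n,\infty}}\sup_{t'}\|I(t')\|$--term into the left--hand side. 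This yields $\sup_{t'}\|I(t')\|_{\B^s_{p,\infty}}\lesssim M$, which is the claim.

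The main obstacle is Step 1: the Littlewood--Paley decomposition must be performed \emph{before} the time integral. Doing so replaces the loss $(t-\tau)^{-1}$ coming from the $\B^{s-2}_{p,\infty}\to\B^s_{p,\infty}$ smoothing by the $j$--dependent exponential $e^{-c(t-\tau)2^{2j}}$, which is integrable in $\tau$ with the right weight. Once Step 1 is in hand, Steps 2--3 are routine thanks to the smallness of $\|U\|_{L^{n,\infty}}$ assumed throughout the section.
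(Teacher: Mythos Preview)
Your argument is correct, but it takes a genuinely different route from the paper's. The paper works directly with $e^{-t\mathcal{A}}$ and uses the real-interpolation characterization $\B^s_{p,\infty}=(\B^{s-\vep}_{p,\infty},\B^{s+\vep}_{p,\infty})_{1/2,\infty}$: after the change of variable $\tau\mapsto t-\tau$ it splits the time integral at a threshold $\lambda_\vep$, applies the smoothing estimate of Lemma~\ref{semigroup} with gains $2\mp\vep$ to the two pieces, and optimizes over $\lambda_\vep$ in the $K$-functional. No absorption step and no a~priori finiteness are needed, since the bound is obtained directly. Your Step~1 replaces this interpolation device by an explicit frequency-by-frequency computation using $\|\vphi_j(D)e^{t\Delta}\|_{L^p\to L^p}\lesssim e^{-ct2^{2j}}$, which is more elementary but is specific to the heat semigroup; you then recover the perturbed estimate through the Duhamel identity and an absorption by the smallness of $\|U\|_{L^{n,\infty}}$. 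The trade-off is that your approach needs the regularization/density step you mention (to make $\sup_{t'}\|I(t')\|_{\B^s_{p,\infty}}$ finite before absorbing, and some care when $t_0=-\infty$), whereas the paper's interpolation argument sidesteps this entirely since the smoothing estimates in Lemma~\ref{semigroup} already incorporate the perturbation. Both methods ultimately rely on the same smallness hypothesis on $U$; the paper hides it inside Lemma~\ref{resolvent}/\ref{semigroup}, while you use it explicitly at the end.
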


\begin{rem}
\begin{enumerate}
\item
Taking the norm in the integral, one has from Lemma \ref{semigroup}, $\|e^{- (t-\tau) \mathcal{A}} \mathbb{P} g(\tau)\|_{\B^s_{p,\infty}} \lesssim (t-\tau)^{-1} \|g(\tau)\|_{\B^{s-2}_{p,\infty}}$, which diverges the integral at $\tau=t$.
This difficulty can be overcome by the characterization $\B^s_{p,\infty} = (\B^{s_0}_{p,\infty}, \B^{s_1}_{p,\infty})_{\theta, \infty}$ with $s= (1-\theta)s_0 + \theta s_1$.

\item
This inequality is related to the $L^\infty$-maximal regularity.
Indeed, combining Lemma \ref{critical} eliminating $\mathbb{P}$ and Lemma \ref{AB}, we obtain the following.
\begin{align*}
\left\|\mathcal{A} \dint_0^t e^{-(t-\sigma) \mathcal{A}} f(\sigma) d\sigma \right\|_{\B^{s(p)-2}_{p,\infty}} & \lesssim \left\| \dint_0^t e^{-(t-\sigma) \mathcal{A}} f(\sigma) d\sigma \right\|_{\X} \\
& \lesssim \sup_{0 < \sigma < t} \|f(\sigma)\|_{\B^{s(p)-2}_{p,\infty}}.
\end{align*}
In other word, if $u(t) := \dint_0^t e^{-(t-\sigma) \mathcal{A}} f(\sigma) d\sigma$, then
\[
\dis \|\mathcal{A}u\|_{L^\infty \left((0,\infty); \B^{s(p)-2}_{p,\infty} \right)} \lesssim \|f\|_{L^\infty \left((0,\infty);\B^{s(p)-2}_{p,\infty} \right)}.
\]
Further, if $u$ solves the differential equation $\partial_tu + \mathcal{A}u = f$ with $u(0)=0$, then it follows that
\[
\|\partial_t u\|_{L^\infty \left((0,\infty); \B^{s(p)-2}_{p,\infty} \right)} + \|\mathcal{A}u\|_{L^\infty \left((0,\infty); \B^{s(p)-2}_{p,\infty} \right)} \lesssim \|f\|_{L^\infty \left((0,\infty); \B^{s(p)-2}_{p,\infty} \right)}.
\]

\end{enumerate}
\end{rem}

\medskip

\begin{proof}
Firstly, we rewrite the integral as follows;
\[
\dint_{t_0}^t e^{-(t-\tau) \mathcal{A}} \mathbb{P} g(\tau) d\tau = \dint_0^\infty e^{-\tau \mathcal{A}} \mathbb{P} \tilde{g}(\tau) d\tau
\]
with $\tilde{g}(\tau) := g(t-\tau) \chi_{(t_0,t)}(\tau)$.
Here, we take $\vep \in (0,1-s)$.
From the characterization $\B^s_{p,\infty} = (\B^{s-\vep}_{p,\infty}, \B^{s+\vep}_{p,\infty})_{1/2,\infty}$, we bound
\[
\left\|\dint_0^\infty e^{- \tau \mathcal{A}} \mathbb{P} \tilde{g}(\tau) d\tau \right\|_{\B^s_{p,\infty}} \lesssim \dis \sup_{\lambda >0} \lambda^{-1/2} \left(\|\Rnum{1}\|_{\B^{s-\vep}_{p,\infty}} + \lambda \|\Rnum{2}\|_{\B^{s+\vep}_{p,\infty}} \right),
\]
where $\Rnum{1} := \dint_0^{\lambda_\vep} e^{- \tau \mathcal{A}} \mathbb{P} \tilde{g}(\tau) d\tau$ and $\Rnum{2} := \dint_{\lambda_\vep}^\infty e^{- \tau \mathcal{A}} \mathbb{P} \tilde{g}(\tau) d\tau$ with $\lambda_\vep >0$.
Using the smoothing estimates for the semigroup and the bounedness of $\mathbb{P}$, one can see
\[
\|\Rnum{1}\|_{\B^{s-\vep}_{p,\infty}} \lesssim \lambda_\vep^{\vep/2} \dis \sup_{\tau > 0} \|\tilde{g}(\tau)\|_{\B^{s-2}_{p,\infty}}\ \ \tnormal{and}\ \ \|\Rnum{2}\|_{\B^{s+\vep}_{p,\infty}} \lesssim \lambda_\vep^{-\vep/2} \sup_{\tau >0} \|\tilde{g}(\tau)\|_{\B^{s-2}_{p,\infty}}.
\]
Optimizing $\lambda_\vep >0$, that is $\lambda_\vep = \lambda^{1/\vep}$, we obtain the desired inequality.
\end{proof}

\medskip

\section{Proof of Theorems \ref{sta} and \ref{main}}
We prove Theorem \ref{sta} in Subsection 4.1 and Theorem \ref{main} in Subsection 4.2, respectively.

\subsection{Construction of stationary solutions}
We construct stationary solutions $f$ by using successive approximations;
\[
U_0:= (-\Delta)^{-1} \mathbb{P} f\quad \tnormal{and}\quad U_{m+1} := U_0 - (-\Delta)^{-1} \mathbb{P}  \nabla (U_m \otimes U_m),\ (m = 0, 1, 2, \cdots).
\]
The boundedness of the projection $\mathbb{P}$ on $\B^{s(p)-2}_{p,\infty}$ and Lemma \ref{product} give us
\begin{align*}
\|U_{m+1}\|_{\X} & \lesssim \|f\|_{\B^{s(p)-2}_{p,\infty}} + \|U_m \otimes U_m\|_{\B^{s(p) - 1}_{p, \infty}} \\
& \lesssim \|f\|_{\B^{s(p)-2}_{p,\infty}} + \|U_m\|_{\X}^2,
\end{align*}
which is enough to complete the proof of (\rnum{1}).

Since one has
\begin{align*}
\|U_{m+1}\|_{\Y} & \lesssim \|f\|_{\B^{s-2}_{p,\infty}} + \|U_m \otimes U_m\|_{\B^{s-1}_{p,\infty}} \\
& \lesssim \|f\|_{\B^{s-2}_{p,\infty}} + \|U_m\|_{\X} \|U_m\|_{\Y},
\end{align*}
the proof of (\rnum{2}) is completed.

\medskip

\subsection{Construction of non-stationary solutions and their asymptotic stability}
Let us define $\vep := \|a-U\|_{\X}$.
We consider the equation
\[
(E)
\begin{cases}
\; \partial_t w + \mathcal{A} [w] + \mathbb{P} (w \cdot \nabla)w = 0 \\
\; w(0)=b:= a-U.
\end{cases}
\]
For simplicity, we denote
\[
\|w\|_{X(t)} := \dis \sup_{\sigma \le t} \|w(\sigma)\|_{\X}\ \tnormal{and}\ \|w\|_{X_\tau(t)} := \sup_{\sigma \le t} \sigma^{\tau/2} \|w(\sigma)\|_{\B^{s(p)+\tau}_{p,1}}.
\]
Define
\[
w_0(t) := e^{- t \mathcal{A}} b \quad \tnormal{and} \quad w_{m+1}(t) := w_0(t) - B(w_m,w_m)(t),
\]
where
\[
B(g,h)(t) := \dint_0^t e^{-(t - \sigma) \mathcal{A}} \mathbb{P} \nabla (g \otimes h)(\sigma) d\sigma.
\]
Lemmas \ref{semigroup} and \ref{critical} give us the following:
\begin{align*}
\|w_0\|_{X(t)} & \lesssim \vep \\
\|w_0\|_{X_{\tau_H}(t)} & \lesssim \vep \\
\|B(g,h)\|_{X(t)} & \lesssim \|g\|_{X(t)} \|h\|_{X(t)} \\
\|B(g,h)\|_{X_{\tau_H}(t)} & \lesssim \left(\|g\|_{X(t)} + \|g\|_{X_{\tau_H}(t)} \right) \left(\|h\|_{X(t)} + \|h\|_{X_{\tau_H}(t)} \right).
\end{align*}
These imply that $\{w_m\}_{m=0}^\infty$ is a Cauchy sequence in both $\X$ and $\x$.
Let us give a proof of the last inequality.
Fix $\tau_0 \in (\tau_H/2,1-n/(2p))$.
Using Lemma \ref{semigroup} and the product estimate $\|gh\|_{\B^{s(p)+2\tau_0-1}_{p,\infty}} \lesssim \|g\|_{\B^{s(p)+\tau_0}_{p,1}} \|h\|_{\B^{s(p)+\tau_0}_{p,1}}$, we have
\[
\|B(g,h)(t)\|_{\x} \lesssim t^{-\tau/2} \|g\|_{X_{\tau_0}(t)} \|h\|_{X_{\tau_0}(t)}.
\]
Interpolation inequality $\|f\|_{X_{\tau_0}(t)} \lesssim \|f\|_{X(t)}^{1-\tau_0/\tau_H} \|f\|_{X_{\tau_H}(t)}^{\tau_0/\tau_H}$ and Young's inequality yield the desired one.
Because $w_0 \in BC \left((0,\infty), \X \cap \x \right)$ is showed from Lemma \ref{semigroup}, and
\begin{align*}
B(w_0,w_0)(t) - B(w_0,w_0)(t \pm \vep) & = \dint_0^{\min(t,t \pm \vep)} e^{- \sigma \mathcal{A}} \mathbb{P} \nabla \left[(w_0 \otimes w_0)(t - \sigma) - (w_0 \otimes w_0)(t \pm \vep - \sigma) \right] d\sigma \\
& + \dint_{\min(t,t \pm \vep)}^{\max(t, t \pm \vep)} e^{-\sigma \mathcal{A}} \mathbb{P} \nabla (w_0 \otimes w_0)(\max(t \pm \vep) - \sigma) d\sigma,
\end{align*}
we see $w_1 \in BC \left((0,\infty),\X \cap \x \right)$ from Lebesgue's convergence theorem.
Repeating this argument gives us $w_m \in BC \left((0,\infty),\X \cap \x \right)$.
Therefore, there exists $w \in BC \left((0,\infty),\X \cap \x \right)$ solving the integral equation
\[
w(t) = e^{-t\mathcal{A}}b - B(w,w)(t).
\]

\medskip

Next, we shall show that $w$, in fact, fulfills the differential equation in (E) in $\B^{s(p)-2}_{p,\infty}$.
We take $t>0$, and then $t_1,t_2 >0$ such that $t_1 < t < t_2$.
\begin{align*}
\dfrac{w(t_2) - w(t_1)}{t_2-t_1} = \dfrac{e^{-(t_2 - t_1)\mathcal{A}} - 1}{t_2-t_1} w(t) - \mathbb{P} \nabla (w \otimes w)(t) - (\Rnum{1} + \Rnum{2} + \Rnum{3}),
\end{align*}
where
\begin{align*}
\Rnum{1} & := \dfrac{e^{-(t_2-t_1)\mathcal{A}}-1}{t_2-t_1} (w(t) - w(t_1)) \\
\Rnum{2} & := \dfrac{1}{t_2-t_1} \dint_{t_1}^{t_2} e^{-(t_2-\sigma) \mathcal{A}} \mathbb{P} \nabla \left((w \otimes w)(\sigma) - (w \otimes w)(t) \right) d\sigma \\
\Rnum{3} & := \dfrac{1}{t_2 - t_1} \dint_{t_1}^{t_2} \left(e^{-(t_2-\sigma)\mathcal{A}} -1 \right) \mathbb{P} \nabla (w \otimes w)(t) d\sigma.
\end{align*}
It follows from Lemma \ref{reso last} that 
\[
\left\|\dfrac{e^{-(t_2-t_1) \mathcal{A}}-1}{t_2-t_1}w(t) + \mathcal{A}w(t) \right\|_{\B^{s(p)-2}_{p,\infty}} \lesssim (t_2-t_1)^{\tau/2} \|w(t)\|_{\x} \to 0\ \tnormal{as}\ t_2,t_1 \to t.
\]
Lemma \ref{semigroup} and the continuity of $w$ ensure the following convergences;
\[
\|\Rnum{1}\|_{\B^{s(p)-2}_{p,\infty}} \lesssim \|w(t) - w(t_1)\|_{\X} \to 0
\]
and 
\[
\|\Rnum{2}\|_{\B^{s(p)-2}_{p,\infty}} \lesssim \|w\|_{X(t_2)} \dis \sup_{t_1 \le \sigma \le t_2} \|w(\sigma) - w(t)\|_{\X} \to 0
\]
as $t_2,t_1 \to t$.
Since
\begin{align*}
\|\Rnum{3}\|_{\B^{s(p)-2}_{p,\infty}} & \lesssim (t_2-t_1)^{\tau_H/2} \|w \otimes w(t)\|_{\B^{s(p)+\tau_H -1}_{p,\infty}} \\
& \lesssim (t_2 - t_1)^{\tau_H/2} \|w(t)\|_{\X} \|w(t)\|_{\x} \to 0
\end{align*}
as $t_2, t_1 \to t$, we obtain that
\[
\partial_tw(t) + \mathcal{A}w(t) + \mathbb{P} \nabla (w \otimes w)(t) = 0\ \tnormal{in}\ \B^{s(p)-2}_{p,\infty}.
\]
Now, let us define $u := w + U \in BC \left((0,\infty), \X \right)$.
Thus, $u$ satisfies the differential equation
\[
\partial_tu(t) - \Delta u(t) + \mathbb{P} \nabla (u \otimes u)(t) = \mathbb{P} f\  \tnormal{in}\ \B^{s(p)-2}_{p,\infty},
\]
and also fulfills $\|u(t) - U\|_{\x} \lesssim \vep t^{-\tau_H/2} \to 0$ as $t \to \infty$.

\medskip

To end the proof of this part, we shall show the equivalence of convergences.
We borrow an argument from Cannone and Karch \cite{C-K}.
Suppose that $u(t) \to U$ in $\X$ as $t \to \infty$.
It is sufficient to verify that $B(w,w)(t)$ converges to zero.
Dividing the integral into two parts, we obtain that for $\delta \in (0,1)$,
\begin{equation} \label{convergence}
\|B(w,w)(t)\|_{\X} \lesssim \dint_0^{\delta t} (1-\sigma)^{-1} \|w(t\sigma)\|_{\X}^2 d\sigma + \dis \sup_{\delta t \le \sigma \le t} \|w(\sigma)\|_{\X}^2 \to 0
\end{equation}
as $t \to \infty$.
Next, we assume that $\dis \lim_{t \to \infty} \|e^{-t\mathcal{A}}b\|_{\X}=0$. 
It is enough to see that $B(w,w)$ tends to zero, again.
Applying the inequality in (\ref{convergence}), one can see that
\[
\dis \limsup_{t \to \infty} \|B(w,w)(t)\|_{\X} \lesssim \vep \left(1+ \log \dfrac{1}{1-\delta} \right) \limsup_{t \to \infty} \|w(t)\|_{\X}.
\]
As a consequence,
\[
\limsup_{t \to \infty} \|w(t)\|_{\X} \lesssim \vep \left(1+ \log \dfrac{1}{1-\delta} \right) \limsup_{t \to \infty} \|w(t)\|_{\X}.
\]
Taking sufficiently small $\delta$, we conclude that $u(t) \to U$ in $\X$ as $t$ tends to $\infty$.

\medskip

(\rnum{2}) Let us denote $M := \|a-U\|_{\Y}$ and
\[
\|w\|_{Y(t)} := \dis \sup_{\sigma \le t} \|w(\sigma)\|_{\Y}.
\]
Similarly as the estimate on $\X$, it follows from Lemmas \ref{product}, \ref{semigroup} and \ref{critical} that
\[
\|w_0\|_{Y(t)} \lesssim M\ \tnormal{and}\ \|B(g,h)\|_{Y(t)} \lesssim \min \left(\|g\|_{X(t)} \|h\|_{Y(t)}, \|g\|_{Y(t)} \|h\|_{X(t)} \right).
\]
Further, the continuity $w_m \in BC \left((0,\infty), \Y \right)$ is also easily seen.
Thus, we find that $w \in BC \left((0,\infty), \Y \right)$ with $\|w\|_{Y(t)} \lesssim M$.

We claim that
\begin{align} \label{main est}
\|B(g,h)(t)\|_{\ZZ} & \lesssim t^{-\gamma/2} \dis \sup_{0 < \tau \le t/2} \min \left(\|g(\tau)\|_{\X} \|h(\tau)\|_{\Y}, \|g(\tau)\|_{\Y} \|h(\tau)\|_{\X} \right) \nonumber \\
& + \sup_{t/2 \le \tau \le t} \min \left(\|g(\tau)\|_{\X} \|h(\tau)\|_{\ZZ}, \|g(\tau)\|_{\ZZ} \|h(\tau)\|_{\X} \right).
\end{align}
We show this inequality after completing the proof of Theorem \ref{main}.

We shall complete the proof, assuming (\ref{main est}).
Following  Bjorland, Brandolese, Iftimie and Schonbek \cite{B-B-I-S}, we have from (\ref{main est})
\begin{align*}
W_{m+1}(t) & := \dis \sup_{T \ge t} \|w_{m+1}(T)\|_{\ZZ} \\
& \lesssim \dis \sup_{T \ge t} \left(T^{-\gamma/2} M + \vep M T^{-\gamma/2} + \vep \sup_{T/2 \le \sigma \le T} \|w_m(\sigma)\|_{\ZZ} \right) \\
& \lesssim t^{-\gamma/2} M + \vep \sup_{t/2 \le \sigma} \|w_m(\sigma)\|_{\ZZ}.
\end{align*}
Hence, it turns out that $W_{m+1}(t) \lesssim t^{-\gamma/2} M + \vep W_m(t/2)$ for all $t \in (0,\infty)$, and then one has that $\dis \limsup_{m \to \infty} W_m(t) \lesssim t^{-\gamma/2} M$ for all $t \in (0,\infty)$.
For any $\theta \in \mathcal{S}$,
\[
\left|\lan w(t), \theta \ran \right| \le \dis \limsup_{m \to \infty} W_m(t) \|\theta\|_{\B^{-(s(p) - \tau_L)}_{p^\prime,1}}.
\]
By duality, we see that $\|w(t)\|_{\ZZ} \lesssim t^{-\gamma/2} M$.

We show the inequality (\ref{main est}).
Following the argument in \cite{B-B-I-S}, we decompose $B(g,h)(t) = e^{-t\mathcal{A}/2} B(g,h)(t/2) + B^\prime(g,h)(t)$, where
\[
B^\prime(g,h)(t) := \dint_{t/2}^t e^{-(t-\sigma) \mathcal{A}} \mathbb{P} \nabla (g \otimes h)(\sigma) d\sigma.
\]
The first part is controlled by Lemmas \ref{semigroup} and \ref{critical};
\begin{align*}
\|e^{-t \mathcal{A} /2} B(g,h)(t/2)\|_{\ZZ} & \lesssim t^{-\gamma/2} \|B(g,h)(t/2)\|_{\Y} \\
& \lesssim t^{-\gamma/2} \sup_{0 < \sigma \le t/2} \min \left(\|g(\sigma)\|_{\X} \|h(\sigma)\|_{\Y}, \|g(\sigma)\|_{\Y} \|h(\sigma)\|_{\X} \right).
\end{align*}
For the latter part, Lemma \ref{critical} also gives
\begin{align*}
\|B^\prime(g,h)(t)\|_{\ZZ} & \lesssim \dis \sup_{t/2 \le \sigma \le t} \|(g \otimes h)(\sigma)\|_{\B^{s(p) - \tau_L -1}_{p,\infty}} \\
& \lesssim \sup_{t/2 \le \sigma} \min \left(\|g(\sigma)\|_{\X} \|h(\sigma)\|_{\ZZ}, \|g(\sigma)\|_{\ZZ} \|h(\sigma)\|_{\X} \right).
\end{align*}
Applying Lemma \ref{product}, we complete the proof of the claim (\ref{main est}).

\subsection*{Acknowledgements}
The third author would like to thank Professor Toshiaki Hishida for several comments.
The works of the first and third authors were partially supported by JSPS, through “Program to Disseminate Tenure Tracking System”.
The work of the second author was partially supported by JSPS, through Gand-in-Aid for Young Scientists (B) 17K14215.
The work of the third author was partially supported by JSPS, through Grand-in-Aid for Young Scientists (B) 15K20919.

\end{document}